\newtheorem{theorem}{Theorem}
\newtheorem{lemma}{Lemma}
\newtheorem{proposition}{Proposition}
\newtheorem{definition}{Definition}
\newtheorem{example}{Example}
\bmdefine{\Bt}{t}
\bmdefine{\BX}{X}
\bmdefine{\BY}{Y}
\bmdefine{\BZ}{Z}
\bmdefine{\BB}{B}
\bmdefine{\BM}{M}
\bmdefine{\BD}{D}
\bmdefine{\Bi}{i}
\bmdefine{\Bj}{j}
\bmdefine{\Bx}{x}
\bmdefine{\By}{y}
\bmdefine{\Bz}{z}
\bmdefine{\Bv}{v}
\bmdefine{\Bw}{w}
\bmdefine{\Bn}{n}
\bmdefine{\Ba}{a}
\bmdefine{\Bb}{b}
\bmdefine{\Bc}{c}
\bmdefine{\Be}{e}
\bmdefine{\Bu}{u}
\bmdefine{\Bp}{p}
\bmdefine{\Bzero}{0}
\bmdefine{\Bone}{1}
\title{Markov Bases for Two-way Subtable Sum Problems}
\author{
Hisayuki Hara\\
Department of Geosystem Engineering\\
University of Tokyo \smallskip\\
Akimichi Takemura\\
Graduate School of Information Science and Technology\\
University of Tokyo\\
and\\
Ruriko Yoshida\\
Department of Statistics\\
University of Kentucky}
\date{August 2007}
\begin{document}
\maketitle

\begin{abstract}
 It has been well-known that for two-way contingency tables with fixed
 row sums and column sums the set of square-free moves of degree
 two forms a Markov basis.
 However when we impose an additional constraint that
 the sum of a subtable is also fixed, then these moves do not necessarily
 form a Markov basis.
 Thus, in this paper, we show a necessary and sufficient condition on a 
 subtable so that 
 the set of square-free moves of degree two forms a Markov basis.
\end{abstract}

\section{Introduction}
Since 
\citet{sturmfels1996} and 
\cite{diaconis-sturmfels}  showed that a set of binomial generators of a 
toric ideal for a statistical model of discrete exponential families is
equivalent to a Markov basis 
and initiated Markov chain Monte Carlo approach based on a Gr\"obner 
basis computation for testing statistical fitting of the given model, 
many researchers have extensively studied the
structure of Markov bases for models in computational
algebraic statistics (e.g.  
\cite{hosten-sullivant, dobra-2003bernoulli, dobra-sullivant,
geiger-meek-sturmfels}).  

In this article we consider Markov bases for two-way contingency
tables with fixed row sums, column sums and an additional
constraint that the sum of a subtable is also fixed.
We call this problem a {\em two-way subtable sum problem}.
From statistical viewpoint this problem is motivated  
by a block interaction model or a two-way change-point model
proposed by 
\cite{hirotsu-1997}, which has been studied from both
theoretical and practical viewpoint(\cite{ninomiya-2004}) and has 
important applications to dose-response clinical trials with ordered categorical
responses.  

Our model also relates to the quasi-independence model for incomplete
two-way contingency tables which contain some structural zeros
(\cite{aoki-takemura-2005jscs}, \cite{rapallo2006}).
Essentially the same problem 
has been studied in detail 
from algebraic viewpoint 
in a series of papers by Ohsugi and Hibi
(\cite{ohsugi-hibi-1999aam}, \cite{ohsugi-hibi-1999ja}, \cite{ohsugi-hibi-2005jaa}).

It has been well-known that for two-way contingency tables with fixed
row sums and column sums the set of square-free moves of degree
two forms a Markov basis.  
However when we impose an additional constraint that
the sum of a subtable is also fixed, then these moves do not necessarily
form  a Markov basis.  
\begin{example}\label{eg}
Suppose we have a $3 \times 3$ table with the following cell counts.
\[
\begin{array}{|c|c|c|}\hline
7 & 5 & 1 \\ \hline
5 & 10 & 6\\ \hline
2 & 6 & 8 \\ \hline
\end{array}\, .
\]
If we fix the row sums $(13, 21, 16)$ and column sums $(14, 21, 15)$, and
also if we fix the sum of two cells at $(1,1)$ and $(2,1)$$($$7+5=12$ in
this  example$)$, 
a Markov basis consists of square-free moves of degree
two.   However, if we fix the sum of 
two cells at $(1,1)$ and $(2,2)$$($$7+10=17$ in this example$)$, 
then a Markov basis contains non-square-free moves such as
\[
\begin{array}{|c|c|c|}\hline
 1 & 1 &-2\\ \hline
-1& -1&  2\\ \hline
0& 0 &0\\ \hline
\end{array}\, .
\]
\end{example}
In this paper we show a necessary and sufficient
condition on a subtable so that
a corresponding Markov basis consists of
square-free moves of degree two.
The results here may give some insights into Markov bases 
for statistical models for general multi-way tables
with various patterns of statistical interaction effects.

Because of the equivalence between a Markov basis and a set of binomial
generators of a toric ideal, a theory of this paper can be
entirely translated and developed in an algebraic framework.  However,
in this paper we make extensive use of pictorial representations of
tables and moves.  Therefore we prefer to develop our theory using
tables and moves. 
See \cite{math.st/0511290}
for a discussion of this equivalence.

This paper is organized as follows:
In section \ref{Preliminaries}, we describe our problem and summarize some preliminary
facts. Section \ref{necsufcond} gives a necessary and sufficient condition that 
a Markov basis consists of square-free moves of degree two. 
We end this paper with some concluding remarks in Section \ref{sec:discussion}.

\section{Preliminaries}\label{Preliminaries}
\subsection{Subtable sum problem and its Markov bases}
Let $\mathbb{N} = \{0,1,2,\ldots\}$ and let $X=\{x_{ij}\}$, 
$x_{ij} \in \mathbb{N}$, $i=1,\ldots,R$, $j=1,\ldots,C$, 
be an 
$R \times C$ 
table with nonnegative integer entries. 
Let ${\cal I} = \{(i,j) \mid 1 \le i \le R, 1 \le j \le C\}$. 
Using statistical terminology, we call $X$ a {\em contingency table} and $\cal I$
the set of {\em cells}.

Denote the row sums and column sums of $X$ by
\[
x_{i+}=\sum_{j=1}^C x_{ij}, \quad i=1,\ldots,R, \qquad
    x_{+j}=\sum_{i=1}^R x_{ij}, \quad  j=1,\ldots,C.
\]
Let $S$ be a subset of ${\cal I}$. Define the subtable sum $x(S)$ by 
\[
 x(S) = \sum_{(i,j) \in S} x_{ij}.
\]
Denote the set of row sums, column sums and $x(S)$ by
$$
\bm{b}=\{x_{1+},\ldots,x_{R+},x_{+1},\ldots,x_{+C},x(S)\}.
$$
For $S=\emptyset$ or $S={\cal I}$, we have
$x(\emptyset)\equiv 0$
or $x({\cal I})=\deg X :=\sum_{(i,j)\in {\cal I}}x_{ij}=\sum_i x_{i+}$.  
In these cases $x(S)$ is redundant and
our problem reduces to a problem concerning tables with fixed row sums and
column sums.  Therefore in the rest of this paper, we consider $S$
which is a non-empty proper subset of $\cal I$.
Also note that $x(S^c)= \deg X - x(S)$, where 
$S^c$ is the complement of $S$.  Therefore fixing $x(S)$ 
is equivalent to fixing $x(S^c)$.

We consider $\bm{b}$ as a column vector with dimension $R+C+1$.
We also order the elements of $X$ with respect to a lexicographic order
and regard $X$ as a column vector with dimension $|{\cal I}|$.
Then the relation between $X$ and $\bm{b}$ is written by 
\begin{equation}
 \label{configulation}
  A_S X = \bm{b}. 
\end{equation}
Here $A_S$ is an $(R+C+1) \times |{\cal I}|$ matrix
consisting of $0$'s and $1$'s.
The set of columns of $A_S$ is a configuration defining a toric ideal $I_{A_S}$.
In this paper we simply call $A_S$ the {\em configuration} for $S$.
The set of tables $X\in {\mathbb N}^{\cal I}$
satisfying (\ref{configulation}) is called 
the {\em fiber} for $\bm{b}$  and is denoted by ${\cal F}(\bm{b})$.

An $R \times C$ integer array $B=\{b_{ij}\}_{(i,j) \in {\cal I}}$ 
satisfying 
\begin{equation}
 \label{move}
  A_S B=\bm{0}
\end{equation}
is called a {\it move} for the configuration $A_S$.  Let
\[
{\cal M}_S=   \{ B \mid  A_S B=\bm{0}\}
\]
denote the  set of moves for $A_S$.  
Let ${\cal B}\subset {\cal M}_S$ be a subset of ${\cal M}_S$.
Note that if $B$ is a move then $-B$ is a move.  We call ${\cal B}$
sign-invariant if $B\in {\cal B} \Rightarrow -B \in {\cal B}$. 
According to \cite{diaconis-sturmfels},  
a Markov basis for $A_S$ is equivalent to 
a set of binomial generators of the corresponding toric ideal for $I_{A_S}$
and defined as follows. 

\begin{definition}
 \label{def:MB}
 A Markov basis for $A_S$ is a sign-invariant finite set of moves 
 ${\cal B}=\{B_1,\ldots,B_L\} \allowbreak \subset {\cal M}_S$ such that, 
 for any $\bm{b}$ and $X$, $Y \in {\cal F}(\bm{b})$, there exist
 $\alpha > 0$, 
$B_{t_1},\ldots,B_{t_{\alpha}} \in {\cal B}$ 
such that 
 $$
 Y = X + \sum_{s=1}^{\alpha} B_{t_s} 
 \quad \text{and} \quad 
 Y = X + \sum_{s=1}^a B_{t_s} 
 \in {\cal F}(\bm{b}) \quad \text{for\;}
 1 \le a \le \alpha.
 $$
\end{definition}

In this paper, for simplifying notation and without loss of
generality, we only consider sign-invariant sets of moves as Markov bases.

For $i \neq  i'$ and $j \neq j'$, 
consider the square-free move of degree two with $+1$ at cells $(i,j)$, $(i',j')$ and $-1$
at cells $(i,j')$ and $(i',j)$ :
$$
\begin{array}{crr}
 & j & j' \\
 i &  1 & -1 \\
 i'& -1 &  1
\end{array}
$$
For simplicity we call this 
a {\it basic move} and and denote it by 
$$
B(i,i';j,j')=(i,j)(i',j')-(i,j')(i',j).
$$
It is well-known that
the set of all basic moves 
$$
{\cal B}_0 = \{B(i,i';j,j') \mid (i,j) \in {\cal I}, 
(i',j') \in {\cal I}, i \neq  i', j \neq  j'\}
$$ 
forms a unique minimal Markov basis for $A_\emptyset$, i.e.\  the
problem concerning tables with fixed rows sums and column sums.
If $B(i,i';j,j')\in {\cal M}_S$, 
we call it a basic move for $S$. 
Define 
\[
{\cal B}_0(S) = {\cal B}_0 \cap {\cal M}_S
\]
which is the set of all basic moves for $S$.  Note that  ${\cal B}_0(S)$
coincides with the set of square-free moves of degree two for $A_S$,
since the row sums and columns sums are fixed.

As clarified in Section \ref{necsufcond}, ${\cal B}_0(S)$ does not
always form a Markov basis for $A_S$.
In Section \ref{necsufcond}, we derive a necessary and sufficient condition 
on $S$ that ${\cal B}_0(S)$ is a Markov basis.

\subsection{Reduction of $L_1$-norm of a move and Markov bases}
In proving that ${\cal B}_0(S)$ is a Markov basis for a given $S$, we
employ the norm-reduction argument of
\cite{takemura-aoki-2005bernoulli} and \cite{aoki-takemura-2003anz}.
Suppose that we have two tables $X$ and $Y$ in the same fiber ${\cal F}$. 
Denote 
$$
X - Y = \{x_{ij}-y_{ij}\}_{(i,j) \in {\cal I}} 
$$ 
and define the $L_1$-norm of $X-Y$ by 
$\Vert X-Y \Vert_1 = \sum_{(i,j) \in {\cal I}} \vert x_{ij} -
y_{ij}\vert$.  
We define that $\Vert X-Y \Vert_1$ can be reduced (in several
steps) by  ${\cal B}_0(S)$ as follows.

\begin{definition}
 \label{def:reduce}
For $X \neq Y$ in the same fiber $\cal F$, we say
that $\Vert X-Y \Vert_1$ can be reduced by ${\cal B}_0(S)$ 
if there exist $\tau^+ \ge 0,  \tau^- \ge 0$, $\tau^+ +\tau^->0$,  and 
sequences of moves 
$B^+_t \in {\cal B}_0(S)$, $t=1,\ldots, \tau^+$, 
and 
$B^-_t \in {\cal B}_0(S)$, 
$t=1,\ldots, \tau^-$, 
satisfying
\begin{equation}
   \label{ineq:L_1_norm}
   \left\{
    \begin{array}{c}
     \displaystyle{
      \Vert X - Y + \sum_{t=1}^{\tau^+} B^+_t
      + \sum_{t=1}^{\tau^-} B^-_t
      \Vert_1 < \Vert X - Y \Vert_1, }\\
     \displaystyle{
      X + \sum_{t=1}^{\tau'}  B^+_t \in {\cal F}, \quad \text{for} \quad
      \tau'=1,\ldots,\tau^+,}\\
     \displaystyle{
      Y - \sum_{t=1}^{\tau'} B^-_t \in {\cal F}, \quad \text{for} \quad
      \tau'=1,\ldots,\tau^-} .
    \end{array}
	 \right.
\end{equation}
\end{definition}

In \cite{takemura-aoki-2005bernoulli} we have mainly considered the
case that $\Vert X-Y \Vert_1$ can be reduced in  one step: $\tau^+ +
\tau^-=1$.  However as discussed in Section 4.2 of
\cite{takemura-aoki-2005bernoulli}, it is clear that ${\cal B}_0(S)$
is a Markov basis for $A_S$ if for every fiber ${\cal F}(\bm{b})$ and
for every $X\neq  Y$ in ${\cal F}(\bm{b})$, 
$\Vert X-Y \Vert_1$ can always be 
reduced by ${\cal B}_0(S)$.  
Here the number of steps $\tau^+ + \tau^- $ needed to reduce 
$\Vert X-Y \Vert_1$ can depend on $X$ and $Y$.  Therefore we
consider a condition that $\Vert X - Y \Vert_1$ can be reduced by
${\cal B}_0(S)$.

As in \cite{aoki-takemura-2003anz}, we look at the patterns of the
signs of $X-Y$.
Suppose that $X - Y$ has the pattern of signs as in 
Figure \ref{figure:pattern-2x2-1}-(i). 
This means 
\[
x_{i'j} < y_{i'j}, \quad x_{ij'} < y_{ij'} 
\]
and the signs of $x_{ij} - y_{ij}$ and  $x_{i'j'} - y_{i'j'}$ are
arbitrary.
Henceforth let $*$ represent that the sign of the cell is arbitrary
as in Figure \ref{figure:pattern-2x2-1}.
Because $x_{i'j} \ge 0$, $x_{ij'} \ge 0$, 
we have
\[
y_{i'j} > 0, \quad y_{ij'} > 0.
\]
Therefore for $B^- = (i,j)(i',j')-(i,j')(i',j) \in {\cal B}_0(S)$, 
we have $Y - B^- \in {\cal F}$ and 
we note that $\Vert X - Y + B^- \Vert_1 \le \Vert X - Y \Vert_1$
regardless of the signs of 
$x_{ij} - y_{ij}$ and $x_{i'j'} - y_{i'j'}$.  
If 
$x_{ij} \le y_{ij}$ and $x_{i'j'} \le y_{i'j'}$, 
$$
\Vert X - Y + B^- \Vert_1 = \Vert X - Y \Vert_1.
$$
On the other hand, if
$x_{ij} > y_{ij}$ or $x_{i'j'} > y_{i'j'}$, i.e. 
$X - Y$ has the pattern of signs as in 
Figure \ref{figure:pattern-2x2-2}-(i) or (ii), 
we have 
\begin{equation}
 \label{ineq:L_1_norm-2}
  \Vert X - Y + B^-\Vert_1 < \Vert X - Y \Vert_1.
\end{equation}
In this case $\tau^+=0$,  $\tau^-=1$ and $B_1^-=B^-$
satisfy (\ref{ineq:L_1_norm}).
By interchanging the role of $X$ and $Y$, we can see that 
the patterns in (i) and (ii) in Figure \ref{figure:pattern-2x2-1}
are interchangeable. 
Hence similar argument can be done for the patterns (ii) in 
Figure \ref{figure:pattern-2x2-1} and (iii), (iv) in
Figure \ref{figure:pattern-2x2-2}. 
\begin{figure}[t]
 \centering
$$
\begin{array}{ccc}
 & j & j' \\
 i & * & - \\
 i'& - & *
\end{array}
\quad
\begin{array}{ccc}
 & j & j' \\
 i & * & + \\
 i'& + & *
\end{array}
$$
\hspace*{0.2cm}
(i)\hspace{1.8cm}(ii)
 \caption{Patterns of signs in a $2 \times 2$ subtable}
\label{figure:pattern-2x2-1}
\end{figure}
\begin{figure}[t]
 \centering
$$
\begin{array}{ccc}
 & j & j' \\
 i & + & - \\
 i'& - & *
\end{array}
\quad
\begin{array}{ccc}
 & j & j' \\
 i & * & - \\
 i'& - & +
\end{array}
\quad
\begin{array}{ccc}
 & j & j' \\
 i & - & + \\
 i'& + & *
\end{array}
 \quad
\begin{array}{ccc}
 & j & j' \\
 i & * & + \\
 i'& + & -
\end{array}
$$
\hspace*{0.2cm}
(i)\hspace{1.8cm}(ii)\hspace{1.8cm}(iii)\hspace{1.7cm}(iv)
 \caption{Patterns of signs in a $2 \times 2$ subtable}
\label{figure:pattern-2x2-2}
\end{figure}

Denote $Z = Z_0 = X-Y$. 
For a sequence of basic moves $B_t\in {\cal B}_0(S)$,
$t=1,\ldots,\tau$ 
denote 
$ Z_t = X - Y +  B_1 + \cdots + B_t$,  
$t=1,\ldots,\tau$. 
Based on the above arguments, we obtain the following lemma.  The
proof is easy and omitted.

\begin{lemma}
\label{lemma:reduction}
$\Vert Z \Vert_1$ can be reduced by ${\cal B}_0(S)$ if there exist
$\tau > 0$ and a sequence of basic moves 
$B_t\in {\cal   B}_0(S)$, 
$t=1,\ldots,\tau$ such that 
$Z_t$, $t = 0 \ldots, \tau-1$, have either of the sign
patterns in Figure \ref{figure:pattern-2x2-1} and 
$Z_{\tau}$ has either of the patterns in Figure
\ref{figure:pattern-2x2-2}.
\end{lemma}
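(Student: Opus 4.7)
My plan is to take the single sequence $B_1,\ldots,B_\tau$ given by the hypothesis and split it into the two sub-sequences $\{B^+_s\}$ and $\{B^-_s\}$ required by Definition~\ref{def:reduce}, by routing each $B_t$ to whichever of $X$ or $Y$ can absorb it. Fix $t$ and let $i,i',j,j'$ be the rows and columns of the $2\times 2$ subtable on which $B_t$ acts. By hypothesis, the restriction of $Z_{t-1}$ to this subtable has one of the patterns of Figure~\ref{figure:pattern-2x2-1} (if $t<\tau$) or Figure~\ref{figure:pattern-2x2-2} (if $t=\tau$); in every such pattern the two off-diagonal entries $(i,j')$ and $(i',j)$ share a common sign. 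When that sign is $-$, I append $B_t$ to the $B^-$-list; when it is $+$, I append it to the $B^+$-list. Let $\tau^-,\tau^+$ be the resulting lengths, so $\tau^-+\tau^+=\tau$.

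Next I would verify that the partial sums stay in ${\cal F}(\bm b)$. Let $X^{(t)}$ and $Y^{(t)}$ denote the cumulative states obtained by applying to $X$ and $Y$ the moves assigned by time $t$, so $Z_t=X^{(t)}-Y^{(t)}$ throughout. Suppose step $t$ is routed to $B^-$: the off-diagonal sign of $Z_{t-1}$ is $-$, hence $y^{(t-1)}_{i,j'}>x^{(t-1)}_{i,j'}\ge 0$ and $y^{(t-1)}_{i',j}>x^{(t-1)}_{i',j}\ge 0$, giving $y^{(t-1)}_{i,j'}\ge 1$ and $y^{(t-1)}_{i',j}\ge 1$. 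Since the orientation of $B_t$ forced by the equation $Z_{t-1}+B_t=Z_t$ places $+1$ at the two off-diagonal cells and $-1$ on the diagonal, subtracting $B_t$ from $Y^{(t-1)}$ leaves every entry nonnegative, whence $Y^{(t)}\in{\cal F}(\bm b)$. The $B^+$-case is symmetric, with the inequalities $x^{(t-1)}_{i,j'},x^{(t-1)}_{i',j}\ge 1$ coming from the two $+$'s in $Z_{t-1}$. Because each certificate depends only on $X^{(t-1)}$ or $Y^{(t-1)}$, every partial sum $X+\sum_{s=1}^{\tau'}B^+_s$ and $Y-\sum_{s=1}^{\tau'}B^-_s$ also lies in ${\cal F}(\bm b)$, as Definition~\ref{def:reduce} demands.

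Finally I would control the $L_1$-norm. For $t<\tau$, the two off-diagonal cells of the pattern contribute $-1$ apiece to $\|Z_t\|_1-\|Z_{t-1}\|_1$, since the sign of $Z_{t-1}$ there is opposite to what $B_t$ contributes, while the two diagonal cells---whose $Z_{t-1}$-signs are arbitrary---contribute at most $+1$ apiece; this gives $\|Z_t\|_1\le\|Z_{t-1}\|_1$. At $t=\tau$ the refined pattern of Figure~\ref{figure:pattern-2x2-2} fixes one diagonal sign opposite to $B_\tau$, yielding a third $-1$ contribution while the remaining diagonal cell still contributes at most $+1$; we conclude $\|Z_\tau\|_1\le\|Z_{\tau-1}\|_1-2$. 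Telescoping delivers $\|Z_\tau\|_1<\|Z_0\|_1=\|X-Y\|_1$, which is the first inequality of~\eqref{ineq:L_1_norm}.

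I expect the only mild obstacle to be the bookkeeping around the interleaving: Definition~\ref{def:reduce} imposes the fiber condition on every partial sum of both lists separately, so one must confirm that reordering the selected moves into two monochromatic blocks does not break nonnegativity. As noted, the positivity certificate at each step depends only on the cumulative state $X^{(t-1)}$ or $Y^{(t-1)}$ of the side being modified, and no later move of the same type intervenes before this certificate is witnessed; hence the partial sums of each list coincide with the intermediate states $X^{(t)}$ and $Y^{(t)}$ already shown to be nonnegative. The rest is a cell-by-cell calculation that closely mirrors the analysis in the paragraph preceding the lemma.
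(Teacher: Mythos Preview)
The paper omits the proof entirely, stating only ``The proof is easy and omitted.'' Your argument is a correct and careful fleshing-out of the informal discussion that precedes the lemma; the central idea---routing each $B_t$ to the $X$-side or the $Y$-side according to the common sign of the off-diagonal pair in $Z_{t-1}$, and observing that the partial sums on each side coincide with the already-verified intermediate states $X^{(t)}$, $Y^{(t)}$---is exactly what the preceding paragraph sets up.

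Two small remarks. First, the lemma as literally stated asks that $Z_\tau$ (not $Z_{\tau-1}$) carry a Figure~\ref{figure:pattern-2x2-2} pattern; under that reading the strict drop comes from one further move determined by that pattern, making the total length $\tau+1$. Your reading (the Figure~\ref{figure:pattern-2x2-2} pattern sits in $Z_{\tau-1}$ and is resolved by $B_\tau$) differs only by a shift in $\tau$, and the later applications of the lemma in the paper are consistent with either convention. Second, the phrase ``the orientation of $B_t$ forced by the equation $Z_{t-1}+B_t=Z_t$'' is not accurate: that equation alone does not pin down the sign of $B_t$, since nothing constrains $Z_t$ on the support of $B_t$. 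What fixes the orientation is the implicit convention, spelled out in the paragraph before the lemma, that the basic move attached to a displayed $2\times 2$ pattern carries $+1$ on the two cells where $Z_{t-1}$ has its common off-diagonal sign $-$ (respectively $-1$ where that sign is $+$). With this understood, your feasibility check $Y^{(t)}=Y^{(t-1)}-B_t\ge 0$ and the telescoping norm estimate go through exactly as you wrote them.
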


This lemma will be repeatedly used from Section
\ref{sec:blockwise-signs} on.

\section{A necessary and sufficient condition}
\label{necsufcond}
In this section we give a necessary and sufficient condition on 
the subtable sum problem so that a Markov basis consists of basic moves,
i.e.\ ${\cal B}_0(S)$ forms a Markov basis for $A_S$.
Figure \ref{figure:pattern_P} shows patterns of $2 \times 3$ and 
$3\times 2$ tables. A shaded area shows a cell belonging to $S$.
Henceforth let a shaded area represent a cell belonging to $S$ or 
rectangular blocks of cell belonging to $S$.
We call these two patterns in Figure \ref{figure:pattern_P} 
the pattern ${\cal P}$ and ${\cal P}^t$, respectively.
Then a necessary and sufficient condition is expressed as follows.

\begin{figure}[htbp]
 \centering
 \includegraphics{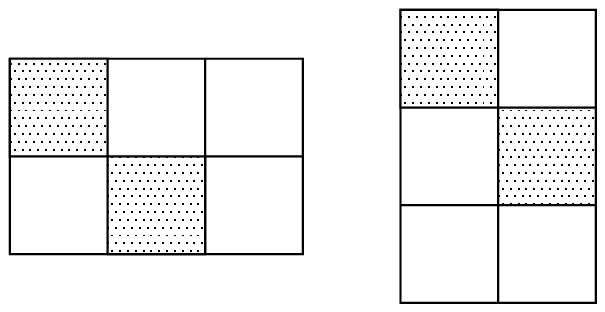}\\
 \hspace*{0.5cm}${\cal P}$\hspace{3cm}${\cal P}^t$\\
 \caption{The pattern ${\cal P}$ and ${\cal P}^t$}
\label{figure:pattern_P}
\end{figure}

\begin{theorem} 
 \label{Th:main}
 ${\cal B}_0(S)$ is a Markov basis for $A_S$
 if and only if 
 there exist no patterns of the form
 ${\cal P}$ or ${\cal P}^t$ in any $2 \times 3$ and $3 \times 2$ 
 subtable of $S$ or $S^c$ after any interchange of rows and columns. 
\end{theorem}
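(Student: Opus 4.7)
The plan is to prove the two directions of Theorem~\ref{Th:main} separately, with necessity by explicit construction and sufficiency via the norm-reduction machinery of Lemma~\ref{lemma:reduction}. The key preliminary observation is that a basic move $B(i,i';j,j')$ lies in ${\cal B}_0(S)$ if and only if its two $+1$ cells $\{(i,j),(i',j')\}$ and its two $-1$ cells $\{(i,j'),(i',j)\}$ have the same cardinality of intersection with $S$; equivalently, the $2\times 2$ rectangle of support contains an even number of $S$-cells with equal $S$-content on the two diagonals.

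For necessity, suppose that after some permutation of rows and columns, a $2 \times 3$ (resp.\ $3 \times 2$) subtable of $S$ exhibits pattern ${\cal P}$ (resp.\ ${\cal P}^t$); by $x(S) \leftrightarrow x(S^c)$, it suffices to handle the case of $S$. I would then construct tables $X, Y \in {\cal F}(\bm{b})$ whose difference $Z = X - Y$ is supported on precisely these six cells with the same non-square-free sign pattern as in Example~\ref{eg}. One checks directly that no $2 \times 2$ subtable of $Z$ exhibits the sign pattern of Figure~\ref{figure:pattern-2x2-2}, so no single basic move from ${\cal B}_0(S)$ decreases $\|Z\|_1$; moreover, a short case check rules out any multi-step reduction consistent with Definition~\ref{def:reduce}, because the constraint on $A_S$ prevents the four corners of any legal basic move from aligning correctly against the support of $Z$. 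Thus ${\cal B}_0(S)$ cannot be a Markov basis.

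For sufficiency, I would assume no forbidden pattern occurs in $S$ or $S^c$, take any $X \neq Y$ in a common fiber, and aim to verify the hypothesis of Lemma~\ref{lemma:reduction}. Setting $Z = X - Y$, if some $2 \times 2$ subtable of $Z$ already displays a Figure~\ref{figure:pattern-2x2-2} pattern then we are done with $\tau = 0$. Otherwise, the only admissible $2 \times 2$ sign patterns of $Z$ are those of Figure~\ref{figure:pattern-2x2-1}, and this global constraint forces a strongly structured distribution of positive and negative cells: up to relabeling the rows and columns can be partitioned into blocks so that $Z$ is essentially sign-constant on each block. Within this block structure I would then identify a basic move $B \in {\cal B}_0(S)$ that can be applied to $Y$ (or to $X$) so that the resulting intermediate difference $Z_1$ either already attains a Figure~\ref{figure:pattern-2x2-2} pattern or yields a strictly simpler block structure, and iterate. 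The no-forbidden-pattern hypothesis is what guarantees at each step the existence of an admissible basic move: the even-$S$-content condition on the four corners is compatible with the required row/column choices, because any obstruction of the form ``no such move exists'' pinpoints a $2 \times 3$ or $3 \times 2$ subtable of $S$ (or of $S^c$) fitting the forbidden template ${\cal P}$ or ${\cal P}^t$.

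The main obstacle will be this sufficiency case analysis. Concretely, the hard step is to translate the combinatorial obstruction ``no basic move in ${\cal B}_0(S)$ advances the reduction'' into a concrete $2 \times 3$ or $3 \times 2$ configuration of $S$-cells, so that the contrapositive of the hypothesis can be invoked. This requires a careful bookkeeping of how the sign classes of $Z$ interact with the indicator of $S$ on small subtables, and likely relies on a preliminary structural lemma that, under the no-forbidden-pattern hypothesis, $S$ (after row and column permutations) admits a block decomposition in which every $2 \times 2$ rectangle has $|S|$-content $0$, $2$, or $4$ with the diagonals aligned. Once this structural picture is in place, constructing the reducing sequence of basic moves should reduce to a finite set of easily checked local cases.
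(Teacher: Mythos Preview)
Your necessity argument is essentially correct and parallels the paper's Proposition~\ref{prop:necessity}, though the paper proceeds more directly: it exhibits a fiber with exactly two elements, so the degree-four move between them is \emph{indispensable} and must lie in every Markov basis. Your norm-reduction phrasing works but is more roundabout.

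The sufficiency sketch, however, has a genuine gap. The structural lemma you propose---that under the no-forbidden-pattern hypothesis every $2\times 2$ rectangle has $|S|$-content $0$, $2$, or $4$ with diagonals aligned---is \emph{false}. That condition says every basic move lies in ${\cal B}_0(S)$, which forces $S=\emptyset$ or $S={\cal I}$. The correct structural result (the paper's Proposition~\ref{prop:2-block_triangular}) is sharper and different in kind: absence of ${\cal P}$ and ${\cal P}^t$ is equivalent to $S$ being, up to row/column permutation, either a $2\times 2$ block diagonal set or a \emph{triangular} (staircase) set. In both of these, many $2\times 2$ subtables have odd $S$-content, so many basic moves are \emph{not} in ${\cal B}_0(S)$; the whole difficulty is to show that the surviving moves nonetheless suffice.

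Once this dichotomy is in hand, the block-diagonal case is a short parity argument (Proposition~\ref{prop:block}). The triangular case is the real work and is substantially harder than your sketch suggests: the paper needs a double induction (on the number of staircase steps $n$ and on an auxiliary row index $i_1$) supported by a chain of lemmas (Lemmas~\ref{lemma:3}--\ref{lemma:8}) that successively constrain the sign pattern of $Z$ along the first and last columns and inside the staircase. Your proposed mechanism---``any obstruction to finding a reducing move pinpoints a forbidden $2\times 3$ or $3\times 2$ pattern''---does not go through directly, because at a given $Z$ the absence of an immediately norm-reducing move in ${\cal B}_0(S)$ does \emph{not} localize to a small subtable of $S$; one must first apply several norm-preserving moves to manoeuvre $Z$ into a position where reduction becomes visible, and it is the global staircase structure (not a local contrapositive) that guarantees this is possible.
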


We give a proof of 
Theorem \ref{Th:main} in the following subsections. 
Note that if ${\cal B}_0(S)$ is a Markov basis for $A_S$, then it
is the {\em unique minimal Markov basis}, because the basic 
moves in ${\cal B}_0(S)$ are all indispensable.  

The outline of this section is as follows.
Section \ref{subsec3:1} gives a proof of the necessary condition.
In Section \ref{subsec3:2} we introduce two patterns of $S$, $2 \times 2$ block
diagonal set and triangular set, and show that 
$S$ or $S^c$ contain patterns of the form ${\cal P}$ or ${\cal P}^t$ if
and only if $S$ is equivalent to either of the two patterns.
Then the sufficiency can be rewritten that 
${\cal B}_0(S)$ forms a Markov basis for $S$ equivalent to a $2 \times
2$ block diagonal set or a triangular set. 
In Section \ref{subsec3:3} we prepare some ingredients to prove the sufficiency.
In Section \ref{subsec3:4} and Section \ref{subsec3:5} we show proofs of the sufficient condition for
$2 \times 2$ block diagonal set and triangular set, respectively.

\subsection{A proof of the necessary condition}\label{subsec3:1}
The necessary condition of Theorem 
\ref{Th:main} is easy to prove.

\begin{proposition}
 \label{prop:necessity}
 If $S$ or $S^c$ contains the pattern ${\cal P}$ or ${\cal P}^t$, 
 ${\cal B}_0(S)$ is not a Markov basis for $A_S$. 
\end{proposition}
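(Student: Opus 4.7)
The plan is to exhibit, for any $S$ containing a pattern ${\cal P}$ or ${\cal P}^t$ in $S$ or in $S^c$, a pair of distinct tables $X,Y$ in the same fiber ${\cal F}(\bm{b})$ such that no basic move in ${\cal B}_0(S)$ can be applied to $X$ while keeping it inside ${\cal F}(\bm{b})$. This isolates $X$ in its fiber under ${\cal B}_0(S)$, so ${\cal B}_0(S)$ cannot connect $X$ to $Y$ and therefore cannot be a Markov basis for $A_S$. Two symmetries reduce the work. Row and column permutations preserve the structure of ${\cal M}_S$ up to cell relabeling, so the offending $2\times 3$ subtable may be placed at rows $\{1,2\}$ and columns $\{1,2,3\}$. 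And since $x(S^c)=\deg X - x(S)$, fixing $x(S)$ is equivalent to fixing $x(S^c)$; hence ${\cal M}_S={\cal M}_{S^c}$ and ${\cal B}_0(S)={\cal B}_0(S^c)$, so we may also assume the pattern sits inside $S$ rather than $S^c$. The ${\cal P}^t$ case follows by transposing rows and columns.

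Let $B$ be the move supported on this $2\times 3$ subtable given there by
$$
\begin{pmatrix} 1 & 1 & -2 \\ -1 & -1 & 2 \end{pmatrix}
$$
and zero elsewhere. All row and column sums of $B$ vanish, and the defining feature of the pattern ${\cal P}$ is precisely that the $S$-weighted sum of these six entries also vanishes, so $B\in{\cal M}_S$. Take $X=B^+$ and $Y=B^-$, the entrywise positive and negative parts of $B$, extended by zero to the full $R\times C$ table. Then $X,Y\in\mathbb{N}^{\cal I}$ and $X-Y=B$, so $X,Y\in{\cal F}(\bm{b})$ for $\bm{b}=A_S X$, with $X\neq Y$.

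The main step is to verify that no basic move $B(i,i';j,j')\in{\cal B}_0(S)$ satisfies $X-B(i,i';j,j')\geq 0$. Since $B(i,i';j,j')$ has $+1$ at both $(i,j)$ and $(i',j')$, non-negativity forces both of these cells into $\mathrm{supp}(X)$. Because $\mathrm{supp}(X)$ consists of only three cells and sits inside the $2\times 3$ subtable, this automatically confines $\{i,i'\}=\{1,2\}$ and $j,j'\in\{1,2,3\}$, leaving only a handful of candidate basic moves. For each surviving candidate we compute $\sum_{(i,j)\in S}B(i,i';j,j')_{ij}$ and, using the explicit placement of $S$-cells dictated by pattern ${\cal P}$, verify that this sum is nonzero, so the candidate is not in ${\cal M}_S$. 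The only real obstacle is this finite case check, but because $\mathrm{supp}(X)$ is so small it is entirely mechanical; the proposition then follows.
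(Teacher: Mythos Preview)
Your argument is correct and follows essentially the same route as the paper: both construct the degree-four move
\[
B=\begin{pmatrix}1 & 1 & -2\\ -1 & -1 & 2\end{pmatrix}
\]
supported on the $2\times 3$ subtable carrying the pattern ${\cal P}$, take $X=B^{+}$ and $Y=B^{-}$, and observe that these two tables in the same fiber cannot be connected by basic moves. The paper phrases the conclusion as ``this fiber has only two elements, hence $B$ is indispensable,'' whereas you phrase it as ``$X$ is isolated under ${\cal B}_0(S)$''; but since $\mathrm{supp}(X)=\{(1,1),(1,2),(2,3)\}$ forces any applicable basic move to lie entirely in the $2\times 3$ subtable, and the two candidates $B(1,2;1,3)$ and $B(1,2;2,3)$ each hit exactly one cell of the pattern ${\cal P}$ and thus have nonzero $S$-sum, your isolation check is exactly the computation underlying the paper's two-element-fiber claim. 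The only difference is that the paper's formulation yields the slightly stronger byproduct that $B$ is indispensable, while your formulation gives precisely what the proposition asks.
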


\begin{proof}
Assume that $S$ has the pattern ${\cal P}$.
Without loss of generality we can assume that 
${\cal P}$ belongs to $\{(i,j) \mid i=1,2,\; j=1,2,3\}$. 
Consider a fiber such that 
\begin{itemize}
 \item $x_{1+} = x_{2+} = 2$, $x_{+1} = x_{+2} = 1$, $x_{+3} = 2$;
 \item $x_{i+}=0$ and $x_{+j}=0$ for all 
       $(i,j) \notin \{(i,j) \mid i=1,2,\; j=1,2,3\}$;
 \item $\sum_{(i,j) \in S} x_{ij} = 1$;
\end{itemize}

Then it is easy to check that this fiber has only two elements 
shown in Figure \ref{figure:fiber}. 
Hence the difference of these two tables
\begin{equation}
\label{eq:principal-ideal}
B = 
\begin{array}{|r|r|r|} \hline
 1 & 1 & -2\\ \hline
 -1 & -1 & 2\\ \hline
\end{array}
\end{equation}
is an indispensable move. 
Therefore if $S$ has the pattern ${\cal P}$, 
there does not exist a Markov basis consisting of basic moves.
When $S$ has the pattern ${\cal P}^t$, a proof is
similar. 

\end{proof}

\begin{figure}[htbp]
 \centering
 \includegraphics{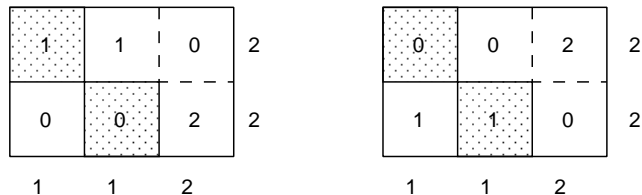}
 \caption{Two elements of the fiber}
\label{figure:fiber}
\end{figure}

It is of interest to note that the toric ideal for the $2\times 3$
table with the pattern ${\cal P}$ of $S$ is a principal ideal generated by a
single binomial corresponding to (\ref{eq:principal-ideal}) whose both
monomials are non-square-free.

\subsection{Block diagonal sets and triangular sets}\label{subsec3:2}
\label{sec:block-diagonal-trangular}
After an appropriate interchange of rows and columns, 
if $S$ satisfies that 
$$
S = \{(i,j) \mid i \le r, j \le c\} \cup
\{(i,j) \mid i > r, j > c\}
$$
for some $r < R$ and $c < C$, 
we say that $S$ is equivalent to a $2 \times 2$ block diagonal set.
Figure \ref{figure:2-block} shows a $2 \times 2$ block diagonal set.
A $2 \times 2$ block diagonal set is decomposed into four blocks
consisting of one or more cells.
We index each of the four blocks  as in Figure \ref{figure:2-block}. 
Note that $S$ is a  $2 \times 2$ block diagonal set if and only if 
$S^c$ is a  $2 \times 2$ block diagonal set.

For a row index $i$, 
let ${\cal J}(i)  = \{ j | (i,j) \in S\}$ denote a slice of 
$S$ at row $i$.    
If for every pair $i$ and $i'$,  
either ${\cal J}(i)$ is a subset of ${\cal J}(i')$ or 
${\cal J}(i)$ is a superset of ${\cal J}(i')$,  
we say that $S$ is equivalent to a {\em triangular set}.
A triangular set is expressed as in Figure \ref{figure:triangular}
after an appropriate interchange of rows and columns.
In general, if we allow transposition of tables, 
triangular sets can be decomposed into 
$n \times (n+1)$ or $n \times n$ blocks as in Figure
\ref{figure:triangular}.  
Figure \ref{figure:triangular} shows examples of $n \times (n+1)$ and 
$n \times n$ triangular sets with $n=4$. 
We index each block as in Figure \ref{figure:triangular}. 
Let ${\cal F}^T $ be a fiber of an $n \times (n+1)$ triangular set. 
Define ${\cal J}_{n+1} = \{j \mid (i,j) \in {\cal I}_{k,n+1}\}$. 
Then we note that if ${\cal F}^T$ satisfies 
$\sum_{i=1}^R x_{ij} = 0$ for all $j \in {\cal J}_{n+1}$, 
the fiber is equivalent to a fiber for an $n \times n$ triangular set.
Hence an $n \times n$ triangular set is interpreted as a special case of 
an $n \times (n+1)$ triangular set.   
Hereafter we consider only $n \times (n+1)$ triangular sets and let 
a triangular set mean $n \times (n+1)$ triangular set. 
Note also that $S$ is a triangular
set if and only if $S^c$ is a triangular set.
In other words, a triangular set is symmetric with respect to 
$180^\circ$ rotation of the table.

\begin{figure}[b]
 \centering
 \includegraphics{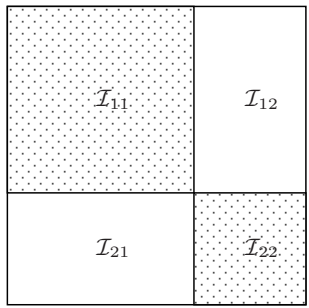}
 \caption{$2 \times 2$ block diagonal set}
 \label{figure:2-block}
\end{figure}

\begin{figure}[htbp]
 \centering
 \includegraphics{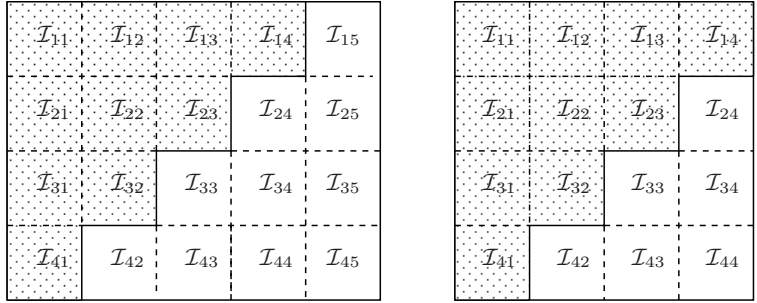}
 \caption{(block-wise) $4 \times 5$ and $4 \times 4$ triangular sets }
 \label{figure:triangular}
\end{figure}

\begin{proposition}
 \label{prop:2-block_triangular}
There exist no patterns of the form
${\cal P}$ or ${\cal P}^t$ in any $2 \times 3$ and $3 \times 2$ 
subtable of $S$ after any interchange of rows and columns  
if and only if 
$S$ is equivalent to a $2 \times 2$ block diagonal set or 
a triangular set.
\end{proposition}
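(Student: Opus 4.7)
The plan is to reformulate both sides of the equivalence in terms of the row-slices ${\cal J}(i) = \{j : (i,j) \in S\}$ and the column-slices ${\cal I}(j) = \{i : (i,j) \in S\}$. Reading off Figure~\ref{figure:pattern_P}, the hypothesis ``no ${\cal P}$-pattern in any $2 \times 3$ subtable of $S$'' translates into the following condition on rows: for every pair $i_1 \neq i_2$ such that ${\cal J}(i_1)$ and ${\cal J}(i_2)$ are incomparable under inclusion, the two slices must partition $\{1, \ldots, C\}$, i.e., ${\cal J}(i_1) \cap {\cal J}(i_2) = \emptyset$ and ${\cal J}(i_1) \cup {\cal J}(i_2) = \{1, \ldots, C\}$. ``No ${\cal P}^t$-pattern'' translates to the analogous partition property for incomparable pairs of column-slices.

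For the easy direction, if $S$ is triangular then the row-slices form a chain under inclusion, so no incomparable row pair exists; moreover the column-slices are then nested upper sets of rows (with respect to the chain ordering) and hence also chained, so both conditions hold vacuously. If $S$ is a $2 \times 2$ block diagonal set, the row-slices take only the two values $J_1, J_2$ that already partition $\{1, \ldots, C\}$, and any incomparable row pair consists of one row of each type, so the partition property is immediate; the column side follows by symmetry.

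For the hard direction, assume $S$ is both ${\cal P}$- and ${\cal P}^t$-free. If every pair of row-slices is comparable they form a chain and $S$ is triangular, so we are done. Otherwise fix an incomparable pair $(i_1, i_2)$ and set $J_k := {\cal J}(i_k)$; ${\cal P}$-freeness then forces $(J_1, J_2)$ to partition $\{1, \ldots, C\}$. The crux of the argument is a four-value lemma asserting that
\[
 {\cal J}(i) \in \{\emptyset,\ J_1,\ J_2,\ J_1 \cup J_2\} \quad \text{for every row } i.
\]
I would prove this by a short case split on the relationship between ${\cal J}(i)$ and $J_1$: if ${\cal J}(i)$ and $J_1$ are incomparable, then ${\cal P}$-freeness applied to $(i, i_1)$ forces ${\cal J}(i) = J_2$; if $\emptyset \subsetneq {\cal J}(i) \subsetneq J_1$, then ${\cal J}(i)$ and $J_2$ are disjoint but both non-empty, hence incomparable, and ${\cal P}$-freeness on $(i, i_2)$ gives a contradiction; and the symmetric case $J_1 \subsetneq {\cal J}(i) \subsetneq J_1 \cup J_2$ is handled dually.

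Once the four-value lemma is in hand, partition the rows into $R_0, R_1, R_2, R_f$ by the value of ${\cal J}(i)$. A direct computation gives ${\cal I}(j) = R_1 \cup R_f$ for $j \in J_1$ and ${\cal I}(j') = R_2 \cup R_f$ for $j' \in J_2$; since $R_1 \ni i_1$ and $R_2 \ni i_2$, these two column-slices are incomparable, so ${\cal P}^t$-freeness forces them to partition all rows. The empty-intersection condition gives $R_f = \emptyset$ and the covering condition gives $R_0 = \emptyset$, so every row lies in $R_1 \cup R_2$. Placing $R_1$ above $R_2$ and $J_1$ left of $J_2$ then realizes $S$ as a $2 \times 2$ block diagonal set. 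The main obstacle is the four-value lemma; once it is in place, the reduction to $R_1 \cup R_2$ via ${\cal P}^t$-freeness and the final reordering are routine.
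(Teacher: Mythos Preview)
Your approach is correct and parallels the paper's closely: both pivot on the observation that the absence of a $2\times 2$ crossing pattern is precisely the triangular case, and that once a crossing pair of rows exists, ${\cal P}$- and ${\cal P}^t$-freeness force the block-diagonal structure. The paper argues the latter pictorially (once the crossing is placed in rows $1,2$ and columns $1,2$, it asserts via Figures~\ref{figure:pattern-1} and~\ref{figure:pattern-2} that the first two rows, the first two columns, and the remaining block must all look block-diagonal), whereas your four-value lemma makes the same deduction explicit by case analysis on row slices and then invokes ${\cal P}^t$-freeness once to eliminate $R_0$ and $R_f$. The logical skeletons are the same; yours is somewhat more formal.

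One point to tighten in your opening translation. With ${\cal P}$ as in Figure~\ref{figure:pattern_P} (two diagonal $S$-cells and four $S^c$-cells in a $2\times 3$ block), forbidding ${\cal P}$ in $S$ alone only yields the \emph{covering} half of your partition condition for incomparable row slices; the \emph{disjointness} half is equivalent to forbidding ${\cal P}$ in $S^c$. Your four-value lemma uses both halves, so your argument is really proving the proposition under the hypothesis of Theorem~\ref{Th:main} (no ${\cal P}$ or ${\cal P}^t$ in $S$ or $S^c$) rather than the literal statement here, which omits $S^c$. This is not a defect in your reasoning---the paper's own proof tacitly uses both as well, and the two hypotheses are meant to coincide---but you should state explicitly that the partition reformulation encodes ${\cal P}$-freeness of both $S$ and $S^c$.
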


\begin{figure}[htbp]
 \centering
 \includegraphics[scale=0.8]{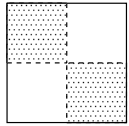}
 \caption{ $2 \times 2$ (cell-wise) crossing pattern}
\label{figure:cross}
\end{figure}

\begin{figure}[htbp]
 \centering
 \includegraphics[scale=0.65]{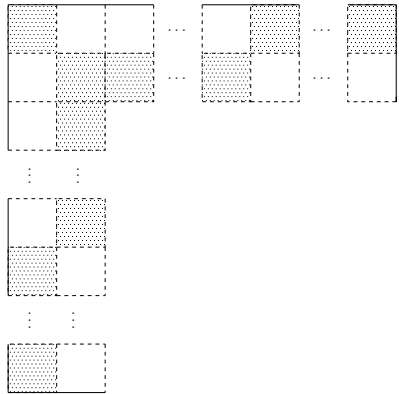}
 \caption{The pattern of  $\{(i,j) \mid i = 1,2\}$ and $\{(i,j) \mid j =
 1,2\}$}
 \label{figure:pattern-1}
\end{figure}

\begin{figure}[htbp]
 \centering
 \includegraphics[scale=0.65]{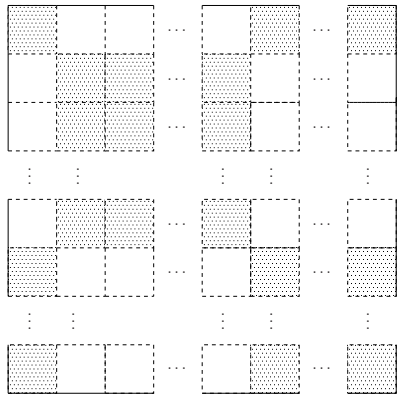}
 \caption{The pattern of $S$ which has a (cell-wise) $2 \times 2$ crossing pattern}
\label{figure:pattern-2}
\end{figure}

\begin{proof}
 Assume that $S$ does not contain ${\cal P}$ and ${\cal P}^t$ 
 and that $S$ contains a (cell-wise) 
 $2 \times 2$ crossing sub-pattern presented 
 in Figure \ref{figure:cross}.  
 Without loss of generality the crossing pattern belongs to 
 $\{(i,j) \mid i=1,2, j=1,2\}$. 
 Since $S$ does not contain ${\cal P}$ and ${\cal P}^t$,
 $\{(i,j) \mid i = 1,2\}$ and $\{(i,j) \mid j = 1,2\}$
 have to have the pattern as in Figure \ref{figure:pattern-1} 
 after an appropriate interchange of rows and columns. 
 In the same way the rest of the table 
 $\{(i,j) \mid i \ge 3, j \ge 3\}$ has to have the pattern as in 
 Figure \ref{figure:pattern-2}. 
 It is clear that the pattern in Figure \ref{figure:pattern-2} is
 equivalent to a $2 \times 2$ block diagonal pattern after 
 interchanging rows and columns. 
 
 From the definition of triangular set, $S$ is not equivalent to a
 triangular set if and only if there exists $i$, $i'$, 
 $i \neq i'$, and  $j$, $j'$, $j\neq j'$, such that
 $j \in {\cal J}(i)$, $j \notin {\cal J}(i')$, 
 $j' \in {\cal J}(i')$ and $j' \notin {\cal J}(i)$. 
 But this is equivalent to the existence of 
 a $2 \times 2$ crossing pattern.
\end{proof}

\subsection{Signs of blocks}\label{subsec3:3}
\label{sec:blockwise-signs}
Based on Proposition \ref{prop:2-block_triangular},   
for the sufficient condition of 
Theorem  \ref{Th:main}
we only need to show that ${\cal B}_0(S)$ forms a Markov basis
for $S$  equivalent to a $2 \times 2$ block
diagonal set or a triangular set.
As mentioned above, a $2 \times 2$ block diagonal set and a triangular set
can be decomposed into some rectangular blocks. 
In general each block consists of more than one cell.
For the rest of this section, we use the following lemma.

\begin{lemma}
 \label{lemma:2}
 Assume that $S$ is equivalent to a $2 \times 2$ block diagonal set or 
 a triangular set.
 Suppose that $Z = \{z_{ij}\}_{(i,j) \in {\cal I}}$ contains a block ${\cal I}_{kl}$ such that 
 $(i,j) \in {\cal I}_{kl}$, $(i',j') \in {\cal I}_{kl}$, 
 $z_{ij} > 0$ and $z_{i'j'} < 0$. 
 Then $\Vert Z \Vert_1$ can be reduced by ${\cal B}_0(S)$. 
\end{lemma}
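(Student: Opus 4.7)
The plan is to exploit the rectangular structure of each block $\mathcal{I}_{kl}=R_k\times C_l$ established in Section~\ref{subsec3:2}, together with the fact that every cell of $\mathcal{I}_{kl}$ has the same $S$-status. The key preliminary observation is that any basic move $B(i,i'';j,j'')$ whose four corners decompose into two same-row pairs $\{(i,j),(i,j'')\}$ and $\{(i'',j),(i'',j'')\}$, with each pair contained in a single block, automatically has vanishing subtable sum: the $+1$ and $-1$ entries of each same-row, same-block pair cancel inside $S$. In particular, every basic move with all four corners inside one block lies in $\mathcal{B}_0(S)$, and so does any basic move whose four corners form two same-row pairs with each pair in a (possibly different) block of the same column strip $C_l$.

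I would split on the relative position of $(i,j)$ and $(i',j')$. Suppose first that $i\ne i'$ and $j\ne j'$. Then $(i,j'),(i',j)\in\mathcal{I}_{kl}$ by rectangularity, so every basic move on these four corners belongs to $\mathcal{B}_0(S)$. I compare the $2\times 2$ sign pattern of $Z$ with Figures~\ref{figure:pattern-2x2-1} and~\ref{figure:pattern-2x2-2}. We have $+$ at $(i,j)$ and $-$ at $(i',j')$, with free off-diagonal signs. If the two off-diagonal entries are strictly negative, the configuration matches Figure~\ref{figure:pattern-2x2-2}(i) and Lemma~\ref{lemma:reduction} delivers a one-step reduction; the analogous case with both strictly positive matches Figure~\ref{figure:pattern-2x2-2}(iv). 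In the residual subcase where one or both off-diagonal entries vanish or have mixed signs, I first apply an auxiliary-row preliminary move, as in the next case, to rotate the $2\times 2$ pattern into one of Figures~\ref{figure:pattern-2x2-1}--\ref{figure:pattern-2x2-2}, and then reduce.

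Suppose next that $i=i'$ (the case $j=j'$ is symmetric). Because $Z$ has zero column sums and $z_{ij}>0$, there exists a row $i''\ne i$ with $z_{i'',j}\le -1$. The basic move $B(i,i'';j',j)$ lies in $\mathcal{B}_0(S)$ by the preliminary observation, since $\{(i,j),(i,j')\}\subseteq\mathcal{I}_{kl}$ and $\{(i'',j),(i'',j')\}$ lies in a single block $\mathcal{I}_{k',l}$ because $j,j'\in C_l$. Applied as $B^-$ to $Y$, this move is feasible: $y_{i,j'}\ge 1$ from $z_{i,j'}=z_{i'j'}<0$ and $y_{i'',j}\ge 1$ from $z_{i'',j}\le -1$. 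It contributes $-1$ to the norm change at each of $(i,j)$, $(i,j')$, and $(i'',j)$ and at most $+1$ at $(i'',j')$, yielding a net reduction of at least two in one step.

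The principal obstacle is the mixed-off-diagonal subcase of the first case, in which no direct Lemma~\ref{lemma:reduction} reduction with $\tau=1$ is available and we must combine within-block and auxiliary-row moves while re-verifying feasibility and $\mathcal{B}_0(S)$-membership at each step. The rectangularity of blocks and the zero-sum properties of the rows and columns of $Z$ provide the flexibility needed to construct such a multi-step sequence in every configuration.
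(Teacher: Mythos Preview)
Your approach coincides with the paper's in spirit: exploit the rectangular block structure so that basic moves whose four corners split into two same-row (or same-column) pairs, each pair inside one block, automatically lie in ${\cal B}_0(S)$; then use the zero row and column sums of $Z$ to locate auxiliary cells. Your $i=i'$ case is correct and is exactly the dual of the paper's $j=j'$ case.

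The gap is precisely the one you flag as the ``principal obstacle.'' For $i\neq i'$, $j\neq j'$ with off-diagonals not both of one sign, you only gesture at an ``auxiliary-row preliminary move'' without specifying it or verifying it works. The paper resolves this more cleanly by reversing your case order: it settles the same-row and same-column cases first, and then in the general case observes that if $z_{i'j}\neq 0$ or $z_{ij'}\neq 0$ one is immediately back in a settled case (e.g.\ $z_{ij'}<0$ yields the same-row pair $(i,j),(i,j')$ of opposite signs inside ${\cal I}_{kl}$, while $z_{ij'}>0$ yields the same-column pair $(i,j'),(i',j')$). Only when $z_{ij'}=z_{i'j}=0$ is a genuine preliminary move needed: choose $j''$ with $z_{ij''}<0$ (exists since row $i$ sums to zero and $z_{ij}>0$); if $(i,j'')\in{\cal I}_{kl}$ this is again a same-row pair, and otherwise apply $B=(i,j'')(i',j')-(i,j')(i',j'')\in{\cal B}_0(S)$, which does not increase the norm and creates $z'_{ij'}=-1<0$, reducing to the same-row case. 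Your trichotomy ``both negative / both positive / residual'' obscures that the mixed-sign and single-zero off-diagonal subcases need no preliminary move at all, and it leaves the both-zero subcase, which is the only genuinely new one, unspecified.
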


\begin{proof}
 Suppose that $j=j'$ and $i\neq i'$.
 Since any row sum of $Z$ is zero, there exists $j''$ such that 
 $z_{ij''} < 0$ as presented in Figure \ref{figure:lemma2-1}.
 Hence $Z$ 
 contains the sign pattern of Figure \ref{figure:pattern-2x2-1}-(i)
 and $\Vert Z \Vert_1$ can be reduced by ${\cal B}_0(S)$.
 When $i=i'$ and $j \neq j'$,
 we can show that $\Vert Z \Vert_1$ can be reduced by ${\cal B}_0(S)$ in
 the similar way. 

 Next we consider the case where $i \neq i'$ and $j \neq j'$.
 If  $z_{i'j} \neq 0$ or $z_{ij'} \neq 0$, 
 we can reduce $\Vert Z \Vert_1$ by using the above argument regardless
 of the signs of them.
 So we suppose $z_{i'j}=0$ and $z_{ij'}=0$.
 There exists $j''$ such that $z_{ij''} < 0$ as 
 presented in Figure \ref{figure:lemma2-2}-(i). 
 If $(i,j'') \in {\cal I}_{kl}$, we can reduce $\Vert Z \Vert_1$
 by using the above argument.
 If $(i,j'') \notin {\cal I}_{kl}$, 
 let $B=(i,j'')(i',j')-(i,j')(i',j'') \in {\cal B}_0(S)$ and 
 let $Z' = \{ z'_{ij}\} = Z + B$.
 Since $z_{ij''} < 0$ and $z_{i'j'} < 0$, 
 we have 
 $\Vert Z' \Vert_1 
 \leq \Vert Z \Vert_1$. 
 We also have 
 $z'_{ij} > 0$ and $z'_{ij'} < 0$. 
 Since $(i,j),(i,j') \in {\cal I}_{kl}$, 
 $\Vert Z' \Vert_1$ can be reduced by ${\cal B}_0(S)$.
 Therefore $Z$ satisfies the condition of  Lemma \ref{lemma:reduction}
 and $\Vert Z \Vert_1$ can be reduced by ${\cal B}_0(S)$.
\end{proof}

 \begin{figure}[htbp]
  \centering
  \includegraphics[scale=0.7]{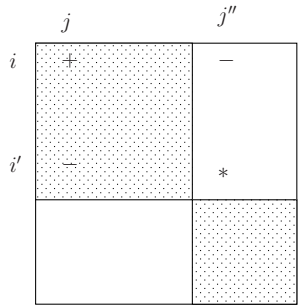}
  \caption{$Z$ when $j=j'$}
  \label{figure:lemma2-1}
 \end{figure}

 \begin{figure}[htbp]
  \centering
  \includegraphics[scale=0.7]{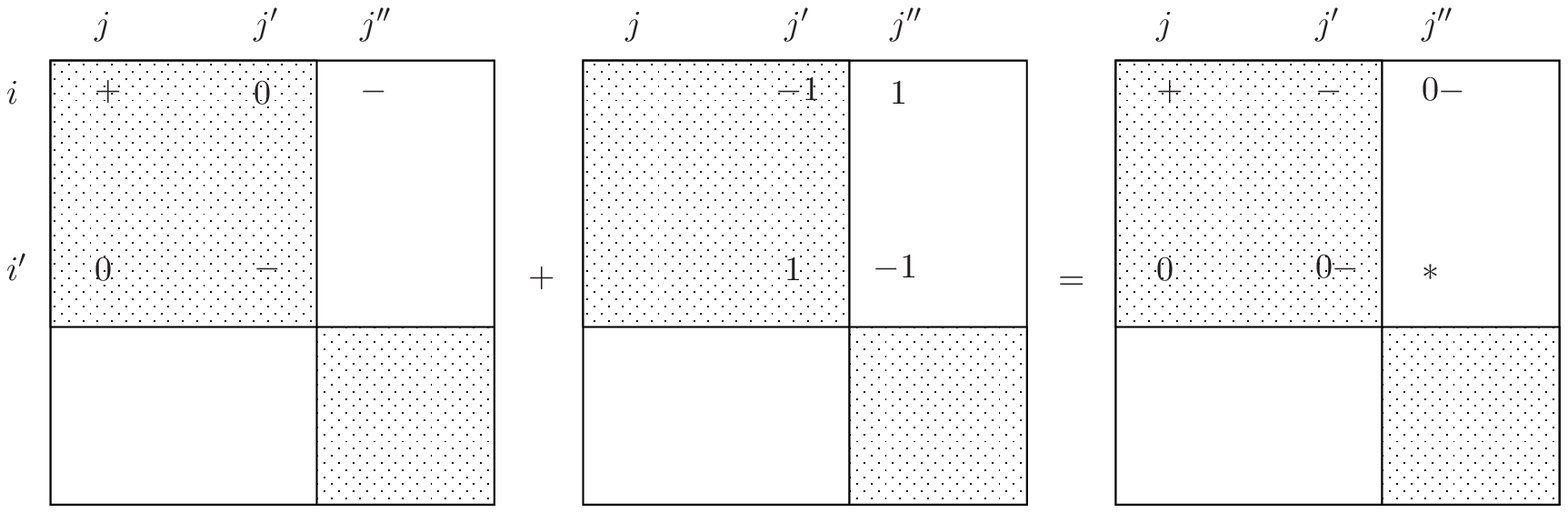}\\
  \hspace*{0.2cm}(i) $Z$ \hspace{2.8cm} (ii) $B$ \hspace{2.8cm} (iii) $Z'$
  \caption{$Z$ and $Z';$ when $j \neq j'$}
  (``$0-$'' represents the cell which is nonpositive)
  \label{figure:lemma2-2}
 \end{figure}

Let ``$0+$'' and ``$0-$'' represent the cell which is nonnegative and
nonpositive, respectively, as in Figure \ref{figure:lemma2-2}.

From Lemma \ref{lemma:2} in order to prove the sufficient condition
of Theorem \ref{Th:main}, we only need to 
consider the case where $Z$ does not have a block 
with both positive and negative cells. 
If all cells in ${\cal I}_{kl}$ are zeros, 
we denote it by ${\cal I}_{kl} = 0$.
If ${\cal I}_{kl} \neq  0$ and all nonzero cells in ${\cal I}_{kl}$ are positive, 
we denote it by ${\cal I}_{kl} > 0$ and we say 
${\cal I}_{kl}$ is positive.
We define ${\cal I}_{kl} < 0$, 
${\cal I}_{kl} \ge 0$ and ${\cal I}_{kl} \le 0$ in the similar way.
Then we say ${\cal I}_{kl}$ is negative, nonnegative and nonpositive,
respectively. 
Then we obtain the following lemma.

\begin{lemma}
 \label{lemma:triangular}
 Assume that $S$ is equivalent to a triangular set.
 Suppose that $Z$ has four blocks ${\cal I}_{kl}$, ${\cal I}_{k'l}$,  
 ${\cal I}_{kl'}$ and ${\cal I}_{k'l'}$ which have either of the patterns
 of signs as follows, 
\begin{center}
$ \begin{array}{ccc}
  & l & l' \\
  k & + & - \\
  k'& - & *
 \end{array}
 \quad
 \begin{array}{ccc}
  & l & l' \\
  k & * & - \\
  k'& - & +
 \end{array}
 \quad
 \begin{array}{ccc}
  & l & l' \\
  k & - & + \\
  k'& + & *
 \end{array}
 \quad
 \begin{array}{ccc}
  & l & l' \\
  k & * & + \\
  k'& + & -
 \end{array}
 $\\ \quad
{\rm (i)}\hspace{1.8cm}{\rm (ii)}\hspace{1.8cm}{\rm
  (iii)}\hspace{1.8cm}{\rm (iv)}\\
\end{center}
where $*$ represents that the sign of the block is arbitrary.
 If there exist $i,i',j,j'$ such that 
 $(i,j) \in {\cal I}_{kl}$, 
 $(i',j) \in {\cal I}_{k'l}$, 
 $(i,j') \in {\cal I}_{kl'}$ 
 $(i',j') \in {\cal I}_{k'l'}$, 
 and $B(i,i';j,j')=(i,j)(i',j') - (i,j')(i',j) \in {\cal B}_0(S)$, then
 $\Vert Z \Vert_1$ can be reduced by ${\cal B}_0(S)$.
\end{lemma}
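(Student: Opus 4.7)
The plan is, for each of the four block-sign patterns (i)--(iv), to exhibit a sequence of moves in ${\cal B}_0(S)$ fitting the hypothesis of Lemma \ref{lemma:reduction}: intermediate tables keep one of the Figure \ref{figure:pattern-2x2-1} sign patterns at the four cells under consideration (so $\Vert Z \Vert_1$ is preserved), and the final move leaves $Z$ with a Figure \ref{figure:pattern-2x2-2} sign pattern (so $\Vert Z \Vert_1$ strictly drops). By the obvious symmetry obtained by interchanging $X \leftrightarrow Y$ or transposing rows and columns, I would only treat pattern (i); patterns (ii)--(iv) then follow verbatim.

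After invoking Lemma \ref{lemma:2}, I may assume every block of $S$ and $S^c$ is uniformly signed, so the block pattern $(+,-,-,*)$ forces $z_{ij}\ge 0$, $z_{i'j}\le 0$, $z_{ij'}\le 0$. If all three inequalities are strict, the $2\times 2$ subtable of $Z$ at $(i,j),(i,j'),(i',j),(i',j')$ already matches Figure \ref{figure:pattern-2x2-2}-(i), and applying $B(i,i';j,j')$ with the appropriate sign yields the one-step reduction (\ref{ineq:L_1_norm-2}).

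Otherwise at least one of $z_{ij}, z_{i'j}, z_{ij'}$ vanishes; say $z_{ij}=0$, while some $(i_1,j_1)\in{\cal I}_{kl}$ has $z_{i_1j_1}>0$. The central observation is that any basic move whose four cells all lie inside a single block belongs automatically to ${\cal B}_0(S)$, since the block is entirely contained either in $S$ or in $S^c$, so $x(S)$ is preserved by the move. I would therefore prepend $B(i,i';j,j')$ by at most three preliminary within-block moves, transporting nonzero mass to whichever of $(i,j),(i',j),(i,j')$ is currently zero. Each preliminary move is chosen so that the intermediate table $Z_t$ has a Figure \ref{figure:pattern-2x2-1} sign pattern on the four cells acted upon, keeping $\Vert Z_t \Vert_1$ constant; after the preliminaries the four specified cells realize the Figure \ref{figure:pattern-2x2-2}-(i) pattern and Lemma \ref{lemma:reduction} delivers the reduction.

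The main obstacle is the degenerate sub-case where the block's nonzero cell is collinear with the target, i.e.\ $i_1=i$ or $j_1=j$: then a single basic move inside the block cannot reach $(i,j)$, so the transport has to proceed in two steps via an auxiliary cell. Such an auxiliary cell is supplied by the vanishing row or column sum of $Z$, which forces a compensating nonzero cell elsewhere in the same block, or -- using the rectangular shape of blocks in a triangular set -- in a neighbouring block of the triangular decomposition. Checking that this two-step transport can always be carried out while every intermediate table remains in the fiber is the technical heart of the argument, but it reduces to a finite enumeration of sub-cases according to which of $(i,j),(i',j),(i,j')$ are zero and where the corresponding block's nonzero mass sits.
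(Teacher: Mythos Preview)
Your plan is workable in spirit but substantially more laborious than the paper's, and the step you flag as ``the technical heart'' is a genuine gap as written.

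The paper's proof does \emph{not} try to transport mass inside each block to the specific cells $(i,j),(i',j),(i,j')$ supplied by the hypothesis. Instead it simply picks cells where the mass already sits: take $(i_b,j_b)\in{\cal I}_{k'l}$ with $z_{i_b j_b}<0$ and $(i_c,j_c)\in{\cal I}_{kl'}$ with $z_{i_c j_c}<0$. The rectangular block structure forces $(i_c,j_b)\in{\cal I}_{kl}$ and $(i_b,j_c)\in{\cal I}_{k'l'}$, and the same block structure guarantees that the move $B=(i_b,j_b)(i_c,j_c)-(i_b,j_c)(i_c,j_b)$ lies in ${\cal B}_0(S)$ (because membership in $S$ depends only on the block, so the hypothesis on $B(i,i';j,j')$ transfers to any basic move spanning the same four blocks). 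If $z_{i_c j_b}>0$ or $z_{i_b j_c}>0$ you have a Figure~\ref{figure:pattern-2x2-2} pattern immediately. If both vanish, applying $B$ makes $z'_{i_c j_b}<0$ while the original positive cell of ${\cal I}_{kl}$ is untouched, so ${\cal I}_{kl}$ now contains both signs and Lemma~\ref{lemma:2} finishes. That is the whole argument: one move, then Lemma~\ref{lemma:2}.

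Your route instead needs within-block basic moves to push mass onto the prescribed cells while preserving $\|Z\|_1$. But for a move entirely inside a nonnegative block to fit the Figure~\ref{figure:pattern-2x2-1} pattern (and hence be norm-preserving), the two cells you subtract from must both be \emph{strictly} positive. Nothing forces this: the block may have a single positive cell at $(i_1,j_1)$ with $z_{i,j_1}=z_{i_1,j}=0$, and then no within-block basic move hitting $(i,j)$ is norm-preserving. Your appeal to row/column sums and ``neighbouring blocks'' to manufacture an auxiliary positive cell is not worked out, and in a triangular set a neighbouring block in the same block-row or block-column lies on the other side of the $S$/$S^c$ boundary, so a move straddling it need not be in ${\cal B}_0(S)$. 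The paper's trick of choosing the cross-cell $(i_c,j_b)$ avoids this difficulty entirely.
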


\begin{proof}
 Assume that the four blocks have the pattern of signs (i) and that 
 $$
 z_{i_b j_b} < 0, \quad 
 (i_b,j_b) \in {\cal I}_{k'l}, \quad 
 z_{i_c j_c} < 0, \quad 
 (i_c,j_c) \in {\cal I}_{kl'}. 
 $$
 We note that $(i_c,j_b) \in {\cal I}_{kl}$. 
 If $z_{i_c j_b} > 0$ or $z_{i_b j_c} > 0$, 
 $\Vert Z \Vert_1$ can be reduced by ${\cal B}_0(S)$.
 Suppose $z_{i_c j_b}= z_{i_b j_c} = 0$.
 Let $B=(i_b,j_b)(i_c,j_c) - (i_b,j_c)(i_c,j_b)$.
 Denote $Z' = \{z'_{ij}\}_{(i,j) \in {\cal I}} = Z+B$. 
 Then we have $z'_{i_c j_b} < 0$.
 Since there exists $(i_a, j_a) \in {\cal I}_{kl}$ such that 
 $z'_{i_a j_a} > 0$, 
 $Z'$ has both positive and negative cells in  ${\cal I}_{kl}$. 
 Hence $\Vert Z \Vert_1$ can be reduced by ${\cal B}_0(S)$ from Lemma
 \ref{lemma:reduction}.  Proofs for the other patterns are the same by symmetry.
\end{proof}

 \begin{figure}[htbp]
  \centering
  \includegraphics{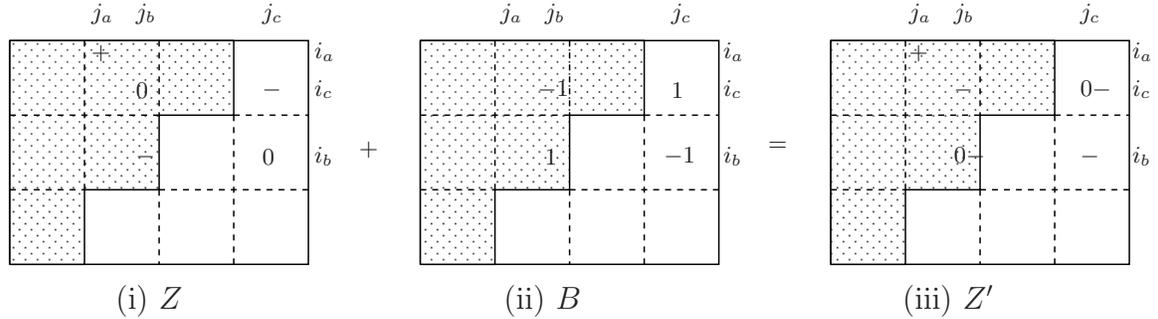}\\
  (i) $Z$ \hspace{4cm} (ii) $B$ \hspace{4cm} (iii) $Z'$
  \caption{$Z$ and $Z'$ when $z_{i_c j_b}= z_{i_b j_c} = 0$}
  \label{figure:lemma3}
 \end{figure}

\subsection{The sufficient condition for $2 \times 2$ block diagonal
  sets}\label{subsec3:4}
In this subsection we give a proof of the sufficient condition of
Theorem \ref{Th:main} when $S$ is equivalent to a $2 \times 2$ block
diagonal set.   

\begin{proposition}
 \label{prop:block}
 If $S$ is equivalent to a $2 \times 2$ block diagonal set, 
 ${\cal B}_0(S)$ is a Markov basis for $A_S$.
\end{proposition}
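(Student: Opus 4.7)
The plan is to use the norm-reduction framework already set up, and reduce the proposition to a direct application of Lemma \ref{lemma:2}. Take any two distinct tables $X \neq Y$ in the same fiber ${\cal F}(\bm{b})$ and set $Z = X - Y \in {\cal M}_S$; the goal is to show that $\|Z\|_1$ can be reduced by ${\cal B}_0(S)$.

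First I would establish the key linear-algebra observation: for a $2 \times 2$ block-diagonal $S$ with blocks ${\cal I}_{11}, {\cal I}_{12}, {\cal I}_{21}, {\cal I}_{22}$ (where $S = {\cal I}_{11} \cup {\cal I}_{22}$), the combined constraints of fixed row sums, column sums, and $x(S)$ force every block sum of $Z$ to vanish. Concretely, since $Z$ has all row and column sums equal to zero, summing over the first $r$ rows gives $z({\cal I}_{11}) + z({\cal I}_{12}) = 0$, summing over the first $c$ columns gives $z({\cal I}_{11}) + z({\cal I}_{21}) = 0$, and the subtable-sum constraint gives $z({\cal I}_{11}) + z({\cal I}_{22}) = 0$. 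Combining with $z({\cal I}_{21}) + z({\cal I}_{22}) = 0$ (sum over the last $R - r$ rows) forces $z({\cal I}_{11}) = 0$, hence $z({\cal I}_{kl}) = 0$ for every block.

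Next I would argue that some block must contain entries of both signs. Suppose for contradiction that every block ${\cal I}_{kl}$ is either nonnegative, nonpositive, or identically zero (using the notation introduced just after Lemma \ref{lemma:2}). Since each block sum is zero, a nonnegative (resp.\ nonpositive) block with sum zero must be identically zero. Hence $Z \equiv 0$, contradicting $X \neq Y$. Therefore there exists a block ${\cal I}_{kl}$ and cells $(i,j), (i',j') \in {\cal I}_{kl}$ with $z_{ij} > 0$ and $z_{i'j'} < 0$. Lemma \ref{lemma:2} then immediately yields that $\|Z\|_1$ can be reduced by ${\cal B}_0(S)$, and as remarked in Section 2.2 this is sufficient for ${\cal B}_0(S)$ to be a Markov basis for $A_S$.

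There is no real obstacle here: all the heavy lifting is done by Lemma \ref{lemma:2}. The only nontrivial step is the rank-counting argument showing that all four block sums of a move vanish, which relies crucially on the extra constraint $x(S)$ being linearly independent from the row and column marginals. I would also remark in passing that ${\cal B}_0(S)$ is then automatically the unique minimal Markov basis, since every basic move in ${\cal B}_0(S)$ is indispensable, as already noted after Theorem \ref{Th:main}.
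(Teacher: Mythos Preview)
Your proof is correct and follows essentially the same approach as the paper: both reduce to Lemma~\ref{lemma:2} by showing that if no block contains entries of both signs then $Z=0$. The only difference is organizational---you first prove explicitly that all four block sums vanish and then conclude, while the paper propagates sign constraints (${\cal I}_{11}\ge 0 \Rightarrow {\cal I}_{12}\le 0$, etc.) and derives the contradiction ${\cal I}_{22}\ge 0$ and ${\cal I}_{22}\le 0$ directly; your version makes the underlying linear algebra more transparent but is otherwise the same argument.
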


\begin{proof}
 Suppose that $Z \neq 0$. 
 If $Z$ contains a block ${\cal I}_{kl}$ which has both
 positive and negative cells,  $\Vert Z \Vert_1$ can be reduced by
 ${\cal B}_0(S)$ from Lemma \ref{lemma:reduction}.  

 Next we suppose that all four blocks are nonnegative or nonpositive. 
 Without loss of generality we can assume that ${\cal I}_{11} \ge 0$.
 Since all row sums and column sums of $Z$ are zeros, 
 we have ${\cal I}_{12} \le 0$, ${\cal I}_{21} \le 0$ and 
 ${\cal I}_{22} \ge 0$.
 On the other hand, since 
 $\sum_{(i,j) \in S} z_{ij} = 0$, 
 we have ${\cal I}_{22} \le 0$.
 However this implies $Z=0$ and  contradicts the assumption.
\end{proof}

\subsection{The sufficient condition for triangular sets}\label{subsec3:5} 
In this subsection we give a proof of the sufficient condition of
Theorem \ref{Th:main} when $S$ is equivalent to an
$n \times (n+1)$ triangular set in Figure \ref{figure:triangular}. 
We only need to consider this case if we allow transposition of the
tables and because of the fact that an $n \times n$ triangular set can
be considered 
as a special case of an  $n \times (n+1)$ triangular set as discussed
in Section \ref{sec:block-diagonal-trangular}.

In general, as mentioned, each block consists of more than one cell.
However for simplicity we first consider the case where every block consists of 
one cell. As seen at the end of this section,
actually it is easy to prove the sufficient condition of Theorem
\ref{Th:main} for general triangular set, 
once it is proved for the triangular sets with 
each block consisting of one cell.  Therefore the main result of this
section is the following proposition.

\begin{proposition}
 \label{prop:triangular-1}
 Suppose that $S$ is equivalent to an $n \times (n+1)$ triangular set
 in Figure \ref{figure:triangular} and every block consists of
 one cell.  
 Then ${\cal B}_0(S)$ is a Markov basis for $A_S$.
\end{proposition}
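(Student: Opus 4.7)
The plan is to verify, via Lemma \ref{lemma:reduction}, that for every pair $X \ne Y$ lying in the same fiber, $\|X - Y\|_1$ can be reduced using moves from $\mathcal{B}_0(S)$. After permuting rows and columns I would put $S$ in the canonical form $S = \{(i,j) : 1 \le j \le i \le n\}$ inside the $n \times (n+1)$ table, and write $Z = X - Y$. The hypotheses give $Z \ne 0$ with all row sums zero, all column sums zero, and $\sum_{(i,j)\in S} z_{ij} = 0$.

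The first step is to classify exactly which basic moves lie in $\mathcal{B}_0(S)$. A direct count shows that, for $i<i'$ and $j<j'$, the move $B(i,i';j,j')$ belongs to $\mathcal{B}_0(S)$ iff $\big|\{j,j'\} \cap (i,i']\big| \in \{0,2\}$; equivalently, iff it is not the case that exactly one of the two column indices straddles the interval $(i,i']$ while the other lies strictly outside. This is the ``parity test'' that the reducing move needs to pass.

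The second step is to locate a reducing $2 \times 2$ sign pattern. Because $Z$ has vanishing row and column sums and $Z \neq 0$, a standard argument produces $i < i'$ and $j < j'$ at which the corresponding $2 \times 2$ submatrix of $Z$ matches one of the patterns of Figure \ref{figure:pattern-2x2-2}. If the associated basic move satisfies the parity test, Lemma \ref{lemma:reduction} finishes the argument in one step. The substantive work is in the case where the natural reducing basic move violates the parity test. There I would use $\sum_{(i,j)\in S} z_{ij} = 0$: combined with vanishing row and column sums, this forces compensating nonzero entries of $Z$ on the opposite side of the diagonal in carefully determined columns. These compensating entries can be harnessed in one of two ways: (i) replace the offending column by a parity-valid column that witnesses the same reducing pattern, or (ii) apply a preliminary norm-preserving basic move of Figure \ref{figure:pattern-2x2-1} type (itself in $\mathcal{B}_0(S)$) that shifts a zero entry and creates a new, parity-valid reducing $2 \times 2$. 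In favorable configurations Lemma \ref{lemma:triangular} applies directly with blocks taken to be single cells.

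The main obstacle will be the parity-correction step when the offending $2 \times 2$ has its ``bad corner'' deep inside the region $(i, i']$: one must argue that the nested structure of $S$ always supplies, somewhere in a third row or column, a nonzero entry of the right sign to enable a parity-valid composition of moves. I expect the proof to split into a handful of sub-cases indexed by which of the two columns $j, j'$ straddles the diagonal (the cases $j \le i < j' \le i'$ versus $i < j \le i' < j'$ from the parity classification above), each resolved by a symmetric choice of auxiliary row or column, with the feasibility conditions of Definition \ref{def:reduce} (nonnegativity along the intermediate tables) checked using the sign information on the compensating entry.
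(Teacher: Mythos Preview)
Your parity characterization of $\mathcal{B}_0(S)$ is correct, and the existence of some reducing $2\times 2$ pattern from Figure~\ref{figure:pattern-2x2-2} follows easily from the vanishing row and column sums. The genuine gap is the parity-correction step, which you flag as the main obstacle but do not carry out. You assert that the constraint $\sum_{(i,j)\in S} z_{ij}=0$ ``forces compensating nonzero entries of $Z$ on the opposite side of the diagonal in carefully determined columns,'' but you give no mechanism for locating such an entry in a position that makes a parity-valid move available, and no argument that your alternative~(ii) --- applying a norm-preserving preliminary move --- terminates rather than cycling among non-reducing configurations. Expecting ``a handful of sub-cases indexed by which of the two columns $j,j'$ straddles the diagonal'' substantially underestimates the difficulty: the offending $2\times 2$ can sit anywhere in the table, and a single auxiliary row or column need not resolve it.

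The paper's proof is organized quite differently and supplies exactly the termination machinery your outline lacks. Rather than starting from an arbitrary reducing $2\times 2$, it focuses on the extreme columns $1$ and $n{+}1$, distinguished because column~$1$ lies entirely in $S$ and column~$n{+}1$ entirely in $S^c$; in particular any basic move on columns $1$ and $n{+}1$ is automatically in $\mathcal{B}_0(S)$. Lemmas~\ref{lemma:3}--\ref{lemma:5} show that unless $\|Z\|_1$ is already reducible, the sign pattern in columns $1$ and $n{+}1$ may be assumed monotone in the sense of~(\ref{cond:i_0}). One then defines $i_1$ as the last row carrying a strictly negative entry in column $1$ or $n{+}1$, and runs a \emph{double induction} on $n$ and on $i_1$. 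The heart of the inductive step is the counting inequality of Lemma~\ref{lemma:6},
\[
|z_{i_1,1}+z_{i_1,n+1}|\ \le\ \sum_{(i,j)\in \bar S^-_{i_1}} |z_{ij}|,
\]
which guarantees that enough parity-valid basic moves (each built from column~$1$ or column~$n{+}1$ together with a cell of $\bar S^-_{i_1}$) exist to zero out $z_{i_1,1}$ and $z_{i_1,n+1}$ without increasing $\|Z\|_1$, thereby strictly decreasing $i_1$ (or emptying both extreme columns and dropping $n$). Your proposal contains nothing corresponding to this inequality or to the induction it drives, and it is precisely this structure that replaces the open-ended ``fix the parity'' search you describe.
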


We prove this proposition based on a series of lemmas.  In all lemmas we
assume that $S$ is equivalent to an $n \times (n+1)$ triangular set.
If $n=1$, each fiber has only one element.  Hence we assume that $n
\ge 2$.

\begin{lemma}
 \label{lemma:3}
 If $Z$ contains a row $i_a$ such that the signs of $z_{i_a 1}$
 and $z_{i_a,n+1}$ are different,   then 
 $\Vert Z \Vert_1$ can be reduced by ${\cal B}_0(S)$.
\end{lemma}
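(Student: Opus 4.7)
My plan is to produce, in one step, a $2\times 2$ configuration of $Z$ matching one of the patterns of Figure \ref{figure:pattern-2x2-2}, for which the corresponding basic move lies in ${\cal B}_0(S)$, so that Lemma \ref{lemma:reduction} immediately delivers the desired reduction. I would first normalize: after possibly replacing $Z$ by $-Z$ (equivalently, interchanging the roles of $X$ and $Y$, which is legitimate since ${\cal B}_0(S)$ is sign-invariant), I may assume $z_{i_a,1}>0$ and $z_{i_a,n+1}<0$.

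Next I would pin down the canonical shape of the triangular set. Since every block is a single cell and $S$ is an $n\times(n+1)$ triangular set, the requirement that there be exactly $n+1$ distinct column types forces, after a suitable interchange of rows and columns, ${\cal J}(i)=\{1,\ldots,i\}$ for $i=1,\ldots,n$. In particular column $1$ lies entirely in $S$ while column $n+1$ lies entirely in $S^c$. Consequently, for any two rows $i,i'$, the $S$-sum of the basic move $B(i,i';1,n+1)$ equals $1-0-1+0=0$, so $B(i,i';1,n+1)\in{\cal B}_0(S)$ for every choice of $i,i'$.

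Then I would locate a second row producing the required sign pattern. Because the column-$1$ sum of $Z$ is zero and $z_{i_a,1}>0$, some row $i_b\neq i_a$ satisfies $z_{i_b,1}<0$. The $2\times 2$ subtable of $Z$ at rows $\{i_a,i_b\}$ and columns $\{1,n+1\}$ then reads
$$
\begin{array}{c|cc}
 & 1 & n+1 \\ \hline
 i_a & + & - \\
 i_b & - & *
\end{array}
$$
which is precisely the pattern in Figure \ref{figure:pattern-2x2-2}-(i). Applying Lemma \ref{lemma:reduction} with the basic move $B(i_a,i_b;1,n+1)$ then reduces $\Vert Z\Vert_1$.

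The only step that requires real care is the combinatorial check that, under the single-cell-block hypothesis, the extreme columns must be uniform with respect to $S$ (column $1$ entirely in $S$ and column $n+1$ entirely in $S^c$); this is exactly what guarantees membership of $B(i_a,i_b;1,n+1)$ in ${\cal B}_0(S)$. Everything else is a direct application of previously established facts.
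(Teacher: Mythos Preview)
Your argument is correct and follows essentially the same route as the paper's proof: normalize to $z_{i_a,1}>0$, $z_{i_a,n+1}<0$, use a column-sum constraint to locate a second row $i_b$, and apply a basic move on columns $1$ and $n+1$ (which always lies in ${\cal B}_0(S)$ since column $1\subset S$ and column $n+1\subset S^c$). The only cosmetic difference is that the paper uses the vanishing of the $(n+1)$-st column sum to find $i_b$ with $z_{i_b,n+1}>0$ (pattern~(iv) of Figure~\ref{figure:pattern-2x2-2}), whereas you use the first column sum to find $i_b$ with $z_{i_b,1}<0$ (pattern~(i)); these choices are symmetric and equally valid.
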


\begin{proof}
 Without loss of generality we can assume that 
 $z_{i_a 1} > 0$ and $z_{i_a,n+1} < 0$. 
 Since $\sum_{i=1}^n z_{i,n+1}=0$, 
 there exists $i_b$ such that $z_{i_b,n+1} > 0$ as presented in 
 Figure \ref{figure:lemma4}.
 Hence if we set $B = (i_a,n+1)(i_b,1)-(i_a,1)(i_b,n+1)$,
 $B \in {\cal B}_0(S)$ and $\Vert Z + B \Vert_1 < \Vert B \Vert_1$. 
\end{proof}

\begin{figure}[htbp]
  \centering
  \includegraphics{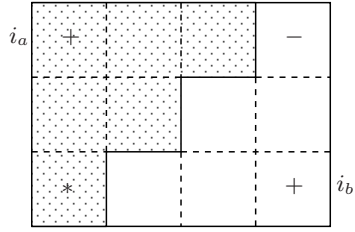}\\
  \caption{The case of $n=3$, $i_a=1$ and $i_b=3$}
  \label{figure:lemma4}
 \end{figure}

\begin{lemma}
 \label{lemma:4}
 Suppose that $Z$ has three rows $i_a < i_b < i_c$ satisfying  
 either of the following conditions, 
 \begin{enumerate}
  \item[{\rm (i)}]  $z_{i_a 1} > 0$, $z_{i_b 1} < 0$ and $z_{i_c 1} >
    0$;
  \item[{\rm (ii)}] $z_{i_a 1} < 0$, $z_{i_b 1} > 0$ and $z_{i_c 1} <
    0$;
 \end{enumerate}
 Then $\Vert Z \Vert_1$ can be reduced by ${\cal B}_0(S)$.
\end{lemma}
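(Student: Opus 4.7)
By the sign-flip symmetry $Z\mapsto -Z$ it suffices to treat case~(i).
First I would dispose of the easy subcase via Lemma~\ref{lemma:3}: if $z_{i_b,n+1}>0$ then row $i_b$ has opposite signs in columns $1$ and $n+1$, and $\Vert Z\Vert_1$ is reduced. Hence I may assume $z_{i_b,n+1}\le 0$; since $z_{i_b,1}<0$ and the $i_b$-th row sums to $0$, there must exist some column $j^*\in\{2,\dots,n\}$ with $z_{i_b,j^*}>0$.

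The plan is then to pick a basic move supported on columns $\{1,j^*\}$ and on some pair of rows drawn from $\{i_a,i_b,i_c\}$, show it lies in ${\cal B}_0(S)$, and show that it reduces $\Vert Z\Vert_1$. In the triangular set of Figure~\ref{figure:triangular}, column $1$ lies entirely in $S$ and column $n+1$ entirely in $S^c$, and the $S$-balance of $B(i,i';1,j^*)$ holds iff $(i,j^*)$ and $(i',j^*)$ lie on the same side of $S$. I would then split on the position of $j^*$ along the staircase: (A) $j^*\le i_b$, (B) $j^*>i_c$, (C) $i_b<j^*\le i_c$. In (A) and (B) the cells $(i_b,j^*)$ and $(i_c,j^*)$ sit on the same side of $S$, so $B(i_b,i_c;1,j^*)\in{\cal B}_0(S)$; adding this move decreases each of $|z_{i_b,1}|$, $|z_{i_c,1}|$, $|z_{i_b,j^*}|$ by $1$ and alters $|z_{i_c,j^*}|$ by at most $1$, giving a net reduction of at least $2$. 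In (C) the cells $(i_b,j^*)$ and $(i_c,j^*)$ lie on opposite sides of $S$ and the previous move fails; but $j^*>i_b>i_a$ forces $(i_a,j^*)$ and $(i_b,j^*)$ both into $S^c$, so $B(i_a,i_b;1,j^*)\in{\cal B}_0(S)$, and subtracting this move decreases each of $|z_{i_a,1}|$, $|z_{i_b,1}|$, $|z_{i_b,j^*}|$ by $1$ with $|z_{i_a,j^*}|$ increasing by at most $1$, again a net reduction of at least $2$.

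The hard part will be case (C): the natural row pair $\{i_b,i_c\}$ fails the $S$-balance condition, so the argument has to jump back to $\{i_a,i_b\}$, where it is rescued by the fact that $z_{i_a,1}>0$ carries exactly the right sign against $z_{i_b,1}<0$ to rebalance the four-cell pattern of the move. Once that rescue is in place the remaining verification is a routine norm count over the four affected cells.
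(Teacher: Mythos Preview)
Your strategy is the paper's: use the zero row sum of row $i_b$ to find a column $j$ with $z_{i_b,j}>0$, then pair row $i_b$ with whichever of $i_a,i_c$ places the two cells $(\cdot,j)$ on the same side of $S$, so the resulting basic move lies in ${\cal B}_0(S)$ and kills three signed entries. The paper does this in a clean two-way split on ``$(i_b,j)\in S$ or not'' (with $j$ allowed to be $n{+}1$), rather than first peeling off $j=n{+}1$ via Lemma~\ref{lemma:3} and then doing your three-way split.

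There is, however, a concrete error: you have the staircase upside down. In the $n\times(n{+}1)$ triangular set used throughout Lemmas~\ref{lemma:3}--\ref{lemma:8} the slices ${\cal J}(i)$ are \emph{decreasing} in $i$; concretely $(i,j)\in S\iff j\le n{+}1-i$. (This is visible in Lemma~\ref{lemma:8}, where $\bar S^c_3=\{(2,n)\}$, and in the paper's own proof of the present lemma, which pairs $i_b$ with the \emph{smaller} index $i_a$ when $(i_b,j)\in S$.) Your thresholds ``$j^*\le i_b$'', ``$i_b<j^*\le i_c$'', ``$j^*>i_c$'' encode the opposite convention $(i,j)\in S\iff j\le i$. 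Under the paper's convention your case~(C) claim that ``$j^*>i_b>i_a$ forces $(i_a,j^*),(i_b,j^*)\in S^c$'' is false: e.g.\ $n=5$, $(i_a,i_b,i_c)=(1,2,3)$, $j^*=3$ gives $(1,3),(2,3)\in S$. The fix is trivial---swap the roles of $i_a$ and $i_c$ throughout, or simply adopt the paper's two-case split: if $(i_b,j)\in S$ use $B=(i_a,j)(i_b,1)-(i_a,1)(i_b,j)$, and if $(i_b,j)\notin S$ (which includes $j=n{+}1$) use $B'=(i_c,j)(i_b,1)-(i_c,1)(i_b,j)$.
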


\begin{proof}
 It suffices to prove  the case of (i).
 Since $z_{i_b 1} < 0$, there exists $j$ such that 
 $2 \le j \le n+1$ and $z_{i_b j} >0 $.
 If $(i_b,j) \in S$ as presented in Figure \ref{figure:lemma5}-(i), 
 $B = (i_a,j)(i_b,1)-(i_a,1)(i_b,j) \in {\cal B}_0(S)$ 
 and 
 $\Vert Z + B \Vert_1 < \Vert Z \Vert_1$.
 If $(i_b,j) \notin S$ as presented in Figure \ref{figure:lemma5}-(ii), 
 $B' = (i_c,j)(i_b,1)-(i_c,1)(i_b,j) \in {\cal B}_0(S)$ 
 and 
 $\Vert Z + B' \Vert_1 < \Vert Z \Vert_1$.
\end{proof}

\begin{figure}[htbp]
  \centering
  \includegraphics{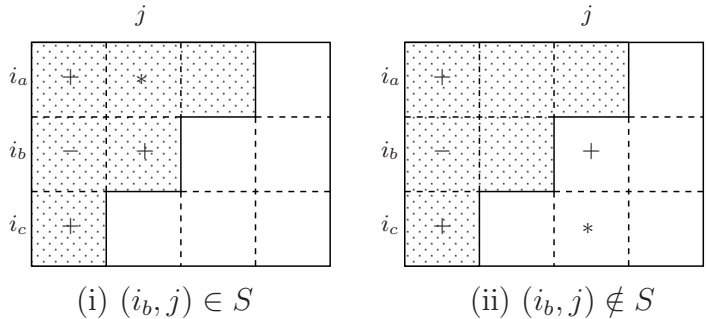}\\
  (i) $(i_b,j) \in S$ \hspace{2.6cm} (ii) $(i_b,j) \notin S$
  \caption{The case of $n=3$, $i_a=1$, $i_b=2$ and $i_c=3$}
  \label{figure:lemma5}
 \end{figure}

\begin{lemma}
 \label{lemma:5}
 Suppose that $Z$ contains four rows $i_a$, $i_b$, $i_c$ and $i_d$
 satisfying 
 $$
 z_{i_a 1} > 0,\quad z_{i_b,n+1} >0,\quad z_{i_c 1} < 0 \quad
 \mathrm{and}\quad
 z_{i_d, n+1} < 0
 $$
 and satisfying either of the following conditions, 
\[
{\rm (i)}\  i_a < i_c < i_b, \quad
{\rm (i')}\  i_a < i_d < i_b, \quad
{\rm (ii)}\  i_b < i_c < i_a, \quad
{\rm (ii')}\  i_b < i_d < i_a \ .
\]
Then $\Vert Z \Vert_1$ can be reduced by ${\cal B}_0(S)$.
\end{lemma}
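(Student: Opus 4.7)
My plan is to reduce Lemma \ref{lemma:5} to the already-proved Lemmas \ref{lemma:3} and \ref{lemma:4} by a case analysis, with an auxiliary basic-move construction handling a residual case.

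First, I would use the symmetries of the problem to cut down to one representative ordering. The triangular set is invariant under $180^\circ$ rotation (which interchanges columns $1$ and $n+1$ and reverses the row index), and the problem is invariant under $Z\leftrightarrow -Z$. These together map the orderings (ii), (i'), and (ii') to (i), so without loss of generality I take $i_a<i_c<i_b$ with $z_{i_a,1}>0$, $z_{i_c,1}<0$, $z_{i_b,n+1}>0$.

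Next I would split on the sign of $z_{i_b,1}$. If $z_{i_b,1}>0$, then rows $i_a<i_c<i_b$ form the $(+,-,+)$ pattern at column $1$ and Lemma \ref{lemma:4}(i) gives the reduction. If $z_{i_b,1}<0$, then row $i_b$ has opposite signs at columns $1$ and $n+1$, and Lemma \ref{lemma:3} applies. The residual case $z_{i_b,1}=0$ is the delicate one: I would further split on the signs of $z_{i_a,n+1}$ and $z_{i_c,n+1}$. If $z_{i_a,n+1}<0$ or $z_{i_c,n+1}>0$, Lemma \ref{lemma:3} applies again to row $i_a$ or row $i_c$. Otherwise $z_{i_a,n+1}\ge 0$ and $z_{i_c,n+1}\le 0$, and I would use the zero row-sum of row $i_b$ to pick some $2\le j\le n$ with $z_{i_b,j}<0$; depending on whether $(i_b,j)\in S$ or not, construct a basic move from ${\cal B}_0(S)$ involving the pair of rows $\{i_a,i_b\}$ or $\{i_c,i_b\}$ at columns $\{1,j\}$. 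This move will either reduce $\|Z\|_1$ directly or produce a configuration to which Lemma \ref{lemma:reduction} applies via one of the earlier cases.

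The main obstacle is exactly this residual subcase, where the zero entry at $(i_b,1)$ together with the absence of opposite-sign pairs in rows $i_a$ and $i_c$ blocks the direct appeals to Lemmas \ref{lemma:3} and \ref{lemma:4}; the auxiliary basic move must be chosen so that it simultaneously lies in ${\cal B}_0(S)$ (which forces a constraint coming from the staircase structure of the triangular set at the chosen column $j$) and creates a sign pattern reducible via Lemma \ref{lemma:reduction}.
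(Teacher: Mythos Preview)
Your symmetry reduction has a genuine gap. The $180^\circ$ rotation interchanges columns $1$ and $n+1$ and reverses the row order, which swaps the roles $(i_a\leftrightarrow i_b,\ i_c\leftrightarrow i_d)$; this maps $(\mathrm{i})\leftrightarrow(\mathrm{i}')$ and $(\mathrm{ii})\leftrightarrow(\mathrm{ii}')$. The sign flip $Z\mapsto -Z$ swaps $(i_a\leftrightarrow i_c,\ i_b\leftrightarrow i_d)$, and under this relabeling the constraint $(\mathrm{ii})$, namely $i_b<i_c<i_a$, becomes $i_d'<i_a'<i_c'$, which is \emph{not} one of the four listed orderings. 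Hence the group generated by rotation and sign flip has the two orbits $\{(\mathrm{i}),(\mathrm{i}')\}$ and $\{(\mathrm{ii}),(\mathrm{ii}')\}$, and you cannot collapse $(\mathrm{ii})$ or $(\mathrm{ii}')$ onto $(\mathrm{i})$ by symmetry alone. The paper does not use symmetry here: for $(\mathrm{ii})$/$(\mathrm{ii}')$ it first applies the basic move $B=(i_a,n+1)(i_b,1)-(i_a,1)(i_b,n+1)$ (which is in ${\cal B}_0(S)$ because columns $1$ and $n+1$ lie entirely in $S$ and $S^c$ respectively), producing $z'_{i_b,1}>0$ and $z'_{i_a,n+1}>0$ without increasing the norm, and \emph{then} one is in situation $(\mathrm{i})$/$(\mathrm{i}')$.

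Your handling of case $(\mathrm{i})$ also diverges from the paper and is incomplete in the residual subcase. The paper does not split on $z_{i_b,1}$; instead it uses the row-sum constraint on row $i_c$ to pick $j\ge 2$ with $z_{i_c,j}>0$ and then branches on whether $(i_c,j)\in S$ (or $j=n+1$) versus $(i_c,j)\in S^c\setminus\{n+1\}$, pairing $i_c$ with $i_a$ via column $1$ in the first branch and with $i_b$ via column $n+1$ in the second. Your alternative of picking $j$ with $z_{i_b,j}<0$ and working at columns $\{1,j\}$ runs into trouble: when $(i_b,j)\notin S$ but $(i_a,j)\in S$ (which is entirely possible since $i_a<i_b$ and the staircase is nested the other way), the $2\times 2$ move on rows $\{i_a,i_b\}$ and columns $\{1,j\}$ has three cells in $S$ and one in $S^c$ and is therefore not in ${\cal B}_0(S)$; the same obstruction can occur for rows $\{i_c,i_b\}$. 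Even when the move is admissible, the sign pattern $(+,\,?;\,0,\,-)$ you obtain does not match Figure~\ref{figure:pattern-2x2-1} or~\ref{figure:pattern-2x2-2}, so you would also need to argue that the norm does not increase before Lemma~\ref{lemma:reduction} can be invoked. Working through column $n+1$ (as the paper does) avoids both difficulties.
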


\begin{proof}
 Suppose (i) $i_a < i_c < i_b$. 
 Since any row sum is zero, there exists $j$ such that 
 $z_{i_c j} > 0$ and $j \ge 2$.
 If $(i_c,j) \in S$ or $j = n+1$, 
 $B=(i_a, j)(i_c, 1) - (i_a, 1)(i_c, j) \in {\cal B}_0(S)$ and 
 $\Vert Z + B \Vert_1 < \Vert Z \Vert_1$ (Figure \ref{figure:lemma6-1}
 shows an example for this case). 
 Suppose that $(i_c,j) \in S^c$ and $j \neq n+1$.
 If $z_{i_c,n+1} < 0$, 
 $B =(i_b,j)(i_c,n+1)-(i_b,n+1)(i_c,j) \in {\cal B}_0(S)$ and 
 $\Vert Z + B \Vert_1 < \Vert Z \Vert_1$. 
 If $z_{i_c,n+1} = 0$, 
 $\Vert Z' \Vert_1 = \Vert Z + B \Vert_1 \le \Vert Z \Vert_1$
 and 
 $z'_{i_c,n+1} > 0$. 
 Since $z'_{i_a,1} > 0$ and $z'_{i_c,1} < 0$, 
 $\Vert Z_1 \Vert$ can be reduced by ${\cal B}_0(S)$.
 Hence $\Vert Z \Vert_1$ can be also reduced by ${\cal B}_0(S)$ (Figure
 \ref{figure:lemma6-2} shows an example for this case).

 \begin{figure}[htbp]
 \centering
 \includegraphics{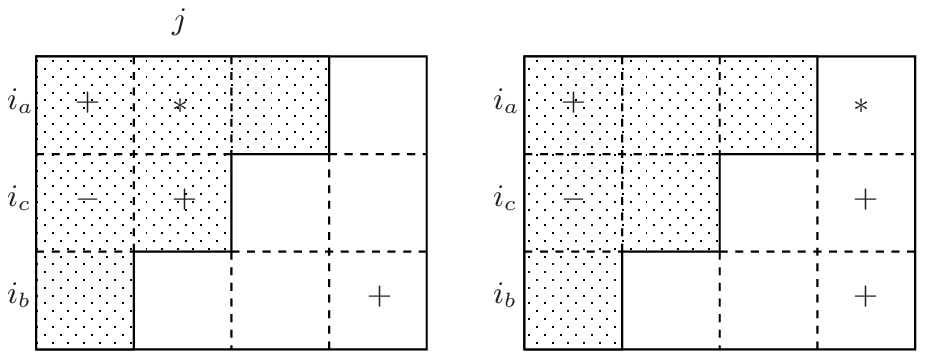}\\
 (i) $(i_c,j) \in S$ \hspace{2.6cm} (ii) $j = n+1$
 \caption{The case of $n=3$ and $i_a < i_c < i_b$}
 \label{figure:lemma6-1}
\end{figure}

 \begin{figure}[htbp]
 \centering
 \includegraphics{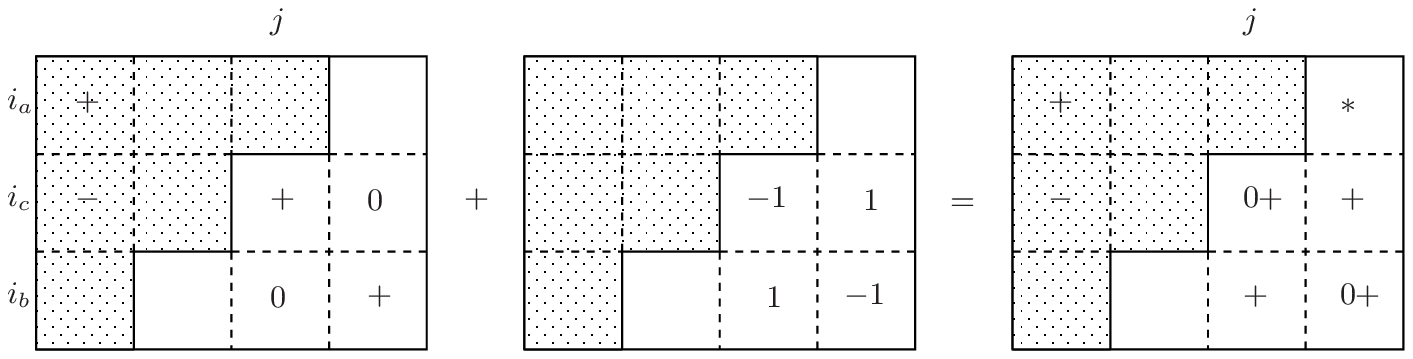}\\
  (i) $Z$ \hspace{3.6cm} (ii) $B$ \hspace{3.6cm} (iii) $Z'$
 \caption{The case of $n=3$ and $i_a < i_c < i_b$}
 \label{figure:lemma6-2}
\end{figure}

 \begin{figure}[htbp]
 \centering
 \includegraphics{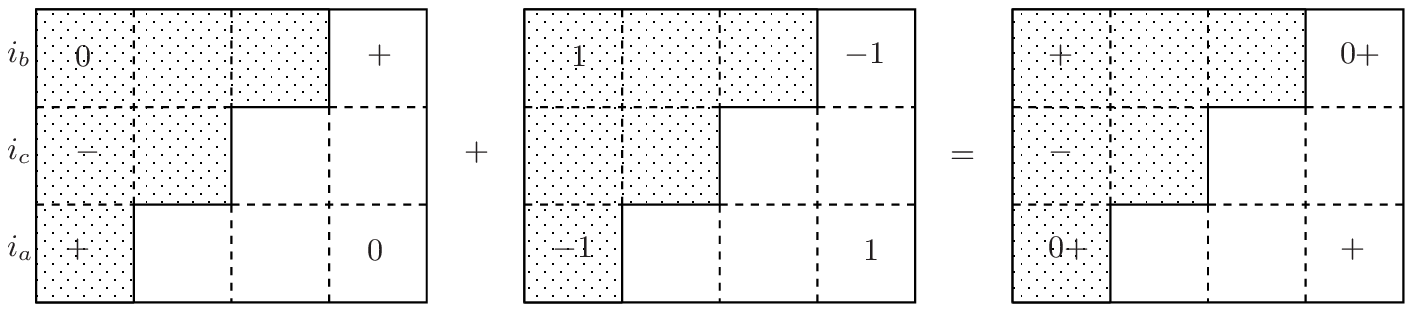}\\
  (i) $Z$ \hspace{3.6cm} (ii) $B$ \hspace{3.6cm} (iii) $Z'$
 \caption{The case of $n=3$ and $i_b < i_c < i_a$}
 \label{figure:lemma6-3}
\end{figure}

 In the case (i') $i_a < i_d < i_b$, we can prove the lemma
 in the same way by the symmetry of $n \times (n+1)$ 
 triangular pattern. 

 Suppose that (ii) $i_b < i_c < i_a$ or (ii') $i_b < i_d < i_a$.
 If $z_{i_a,n+1} < 0$ or $z_{i_b,1} < 0$, 
 the lemma holds from Lemma \ref{lemma:3}. 
 So we suppose that $z_{i_a,n+1} \ge 0$ and $z_{i_b,1} \ge 0$. 
 Let 
 $B = (i_a,n+1)(i_b,1) - (i_a, 1)(i_b, n+1)$.
 Then we have 
 $\Vert Z' \Vert = \Vert Z + B \Vert_1 \le \Vert Z \Vert_1$
 and 
 $z'_{i_a,n+1} > 0$, $z'_{i_b,1} > 0$. 
 Since $z'_{i_c,1}=z_{i_c,1} < 0$ and $z'_{i_d,n+1}=z_{i_d,n+1}<0$, 
 we can prove the lemma by applying the above argument (Figure
 \ref{figure:lemma6-3} shows an example for this case).
\end{proof}

From the definition of $L_1$-norm, $\Vert Z \Vert_1$ can be reduced
by ${\cal B}_0(S)$, if and only if $\Vert -Z \Vert_1$ can be reduced
by ${\cal B}_0(S)$. 
Thus without loss of generality we can assume that 
$z_{11} \ge 0$. 
{}From Lemma \ref{lemma:3}, \ref{lemma:4} and \ref{lemma:5}, 
it suffices to show that 
$\Vert Z \Vert_1$ can be reduced by ${\cal B}_0(S)$  
if  $Z$ satisfies
\begin{equation}
 \label{cond:i_0}
  \left\{
  \begin{array}{l}
   \exists i_0 \ge 1 \quad \mathrm{s.t.}\\
   z_{i1} \ge 0 \quad \mathrm{and} \quad z_{i,n+1} \ge 0 \quad
    \mathrm{for} \quad i \le i_0,\\
   z_{i1} \le 0 \quad \mathrm{and} \quad z_{i,n+1} \le 0 \quad
    \mathrm{for} \quad i > i_0, 
  \end{array}
  \right.
\end{equation}
as shown in Figure \ref{figure:exm-9}. 

 \begin{figure}[htbp]
 \centering
 \includegraphics{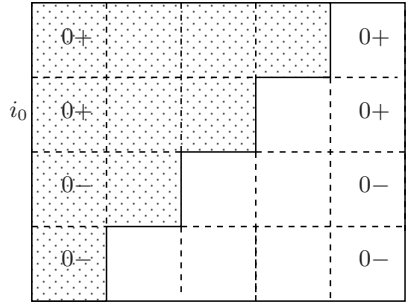}
 \caption{The case of $n=4$ and $i_0 = 2$}
 \label{figure:exm-9}
\end{figure}
We look at rows of  Figure  \ref{figure:exm-9} from the bottom and find the
last row $i_1$ such that at least one of 
$z_{i_1 1}$ or $z_{i_1, n+1}$ is negative, i.e., define $i_1$ by the
following conditions.
\begin{equation}
\label{eq:i1}
\begin{array}{ll}
  {\rm (i)} &  z_{i_1 1} \le 0  \text{ and } z_{i_1, n+1} \le 0;\\
  {\rm (ii)} &  z_{i_1 1} < 0 \text{ or } z_{i_1, n+1} < 0;\\
  {\rm (iii)} & z_{i1} = 0  \text{ and } z_{i,n+1}=0  \text{ for } i > i_1;
\end{array} 
\end{equation}
Note that if there exists no $i_1$ satisfying these conditions, then
the first column and  the last column of the table consists of only
zeros and we can use the induction on $n$.  Therefore 
for Lemmas  \ref{lemma:7}--\ref{lemma:8} below, 
we assume that there exists $i_1$ satisfying  (\ref{eq:i1}).
We also note that $i_1 > i_0$ when $i_1$ exists.

\begin{lemma}
\label{lemma:7}
Suppose $Z$ satisfies (\ref{cond:i_0}) and define $i_1$ 
by (\ref{eq:i1}) assuming that $i_1$ exists.
$\Vert Z \Vert_1$ can be reduced
if there exists 
$z_{ij} > 0$ for some $(i,j) \in S$ and $i \ge i_1$.
\end{lemma}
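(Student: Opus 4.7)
The plan is to exhibit a basic move in ${\cal B}_0(S)$ (possibly after some norm-preserving chaining) whose combination with $Z$ witnesses the reduction criterion of Lemma~\ref{lemma:reduction}. I would fix a cell $(i^*, j^*) \in S$ with $z_{i^*, j^*} > 0$ and $i^* \ge i_1$. Since $i^* > i_0$, condition~(\ref{cond:i_0}) gives $z_{i^*, 1} \le 0$ and $z_{i^*, n+1} \le 0$. Using the $180^\circ$ rotational symmetry of an $n \times (n+1)$ triangular set, I may assume without loss of generality that $z_{i_1, 1} < 0$; the case $z_{i_1, n+1} < 0$ is handled by the symmetric argument via column $n+1$. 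Combining the column-$1$ balance $\sum_i z_{i,1} = 0$ with $z_{i,1} \le 0$ on $\{i > i_0\}$ and $z_{i_1, 1} < 0$, I then locate some row $i_a \le i_0$ with $z_{i_a, 1} > 0$.

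The natural candidate is $B = B(i_a, i^*; 1, j^*)$, acting on the $2 \times 2$ submatrix at rows $\{i_a, i^*\}$ and columns $\{1, j^*\}$, whose sign pattern in $Z$ is
\[
\begin{array}{ccc}
      & 1      & j^* \\
 i_a  & +      & *   \\
 i^*  & \le 0  & +
\end{array}.
\]
Since $(i_a, 1), (i^*, 1), (i^*, j^*) \in S$ (column $1$ always lies in $S$ for this triangular pattern, and $(i^*, j^*) \in S$ by assumption), the $S$-parity condition for $B \in {\cal B}_0(S)$ reduces to $(i_a, j^*) \in S$, i.e., to $j^* \le i_a$ under the nested-row structure. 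When $j^* \le i_a$, the pattern matches Figure~\ref{figure:pattern-2x2-2}-(iii) whenever $z_{i^*, 1} < 0$ (forcing $i^* = i_1$), delivering a one-step reduction via Lemma~\ref{lemma:reduction}. When $z_{i^*, 1} = 0$ (so $i^* > i_1$), the pattern matches Figure~\ref{figure:pattern-2x2-1}-(ii) instead, and I chain: applying $B$ yields $Z'$ with $z'_{i^*, 1}$ becoming nonzero while $z'_{i_1, 1} < 0$ is preserved, and a column-$j''$ witness with $z_{i_1, j''} > 0$ (guaranteed by row-$i_1$ balance) then builds a reducing second step in the $2 \times 2$ at rows $\{i^*, i_1\}$. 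When instead $j^* > i_a$ for every natural column-$1$ witness, I switch to column $n+1$: a symmetric witness $i_b \le i_0$ with $z_{i_b, n+1} > 0$ yields $B(i_b, i^*; n+1, j^*) \in {\cal B}_0(S)$, whose $S$-parity is now satisfied precisely when $j^* > i_b$, compatible with this sub-case.

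The main obstacle I foresee is the combined scenario $j^* > i_a$ for all column-$1$ witnesses together with $i^* > i_1$: the cell $z_{i^*, 1} = 0$ rules out a direct Figure~\ref{figure:pattern-2x2-2} match, and the staircase geometry can push the candidate move outside ${\cal B}_0(S)$. Resolving this requires simultaneously exploiting the rotational symmetry (to interchange columns $1$ and $n+1$), the flexibility of choosing among several witnesses, and multi-step chaining via Lemma~\ref{lemma:reduction}, with intermediate norm-preserving moves creating favorable sign patterns before a final reducing move; careful bookkeeping of row and column indices relative to the $S$-boundary will be essential to verify the $S$-parity at each step.
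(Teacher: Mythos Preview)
Your overall plan is sound, but it rests on a misconception about the $S$-parity condition for $B(i_a,i^*;1,j^*)$. You claim membership in ${\cal B}_0(S)$ reduces to ``$(i_a,j^*)\in S$, i.e.\ $j^*\le i_a$''. The inequality is the wrong one for this triangle (here $(i,j)\in S\iff j\le n+1-i$), and more importantly the condition is \emph{automatically} satisfied: since $(i^*,j^*)\in S$ and $i_a\le i_0<i_1\le i^*$, the nesting ${\cal J}(i_a)\supseteq{\cal J}(i^*)$ forces $(i_a,j^*)\in S$. Consequently your branch ``$j^*>i_a$ for every column-$1$ witness'' and the entire column-$(n{+}1)$ detour are vacuous, and the ``main obstacle'' you anticipate never arises.

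With that corrected, the case $i^*=i_1$ is exactly the paper's direct step. For $i^*>i_1$ your chain differs from the paper's and has an unchecked gap: the second move at rows $\{i_1,i^*\}$ and columns $\{1,j''\}$ lies in ${\cal B}_0(S)$ only if $(i_1,j'')$ and $(i^*,j'')$ have the same $S$-membership, and since $i_1<i^*$ this fails precisely when $(i_1,j'')\in S$ but $(i^*,j'')\notin S$. The paper sidesteps this by first reducing to the situation where row $i_1$ has \emph{no} positive $S$-cell (otherwise apply the direct step with $i=i_1$ and that cell), so that any $j'$ with $z_{i_1 j'}>0$ satisfies $(i_1,j')\in S^c$ and hence $(i,j')\in S^c$ for the given $i>i_1$; it then uses the move $(i_1,j)(i,j')-(i_1,j')(i,j)\in{\cal B}_0(S)$ to transport the positive sign from $(i,j)$ to $(i_1,j)\in S$, after which the direct step applies at row $i_1$. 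Your chain is repairable by the same reduction---once row $i_1$ has no positive $S$-cell, your $j''$ lies in $S^c$ for both rows---but as written you omit this verification.
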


\begin{proof}
 Consider the case $z_{i_1 1}<0$.
 We note that there has to exist
 $i' < i_0$ such that $z_{i'1} > 0$
 from the condition (\ref{cond:i_0}). 
 Suppose that $i=i_1$. 
 Let $B$ be $B=(i_1, 1)(i', j) - (i_1, j)(i', 1)$.
 Then $\Vert Z + B \Vert_1 < \Vert Z \Vert_1$.
 Suppose that $i > i_1$ and $z_{i_1 j} \le 0$ for $(i_1,j) \in S$. 
 There has to exist $j'$ such that $z_{i_1 j'} > 0$.
 Let 
 $B=(i_1,j)(i,j')-(i_1,j')(i,j)$.
 If $z_{i_1 j} < 0$ or $z_{i j'} < 0$, 
 $\Vert Z' \Vert_1 = \Vert Z + B \Vert_1 
 < \Vert Z \Vert_1$.
 If $z_{i_1 j} = 0$ or $z_{i j'} = 0$, 
 $\Vert Z' \Vert_1 = \Vert Z + B \Vert_1 
 \le \Vert Z \Vert_1$.
 As shown in Figure \ref{figure:exm-10}, 
 since 
 $z'_{i'1} > 0$, $z'_{i_0 1} < 0$ and $z'_{i_0 j} >0$, 
 $\Vert Z' \Vert$ can be reduced by ${\cal B}_0(S)$. 
 Hence $\Vert Z \Vert_1$ can also be reduced by ${\cal B}_0(S)$.

 Next we consider the case $z_{i_1,n+1}<0$.
 Then there has to exist  $i' < i_0$ such that $z_{i',n+1} > 0$
 from the condition (\ref{cond:i_0}). 
 When $i=i_1$, let 
 $B=(i_1,n+1)(i',j)-(i_1,j)(i',n+1)$.
 Then $\Vert Z + B \Vert_1 < \Vert Z \Vert_1$.
 When $i > i_1$ and $z_{i_1 j} \le 0$ for $(i_1,j) \in S$, 
 a similar proof to the case $z_{i_1 1}<0$ 
 can be given as shown in Figure \ref{figure:exm-10-2}

\end{proof}
 \begin{figure}[htbp]
 \centering
  \includegraphics[scale=0.6]{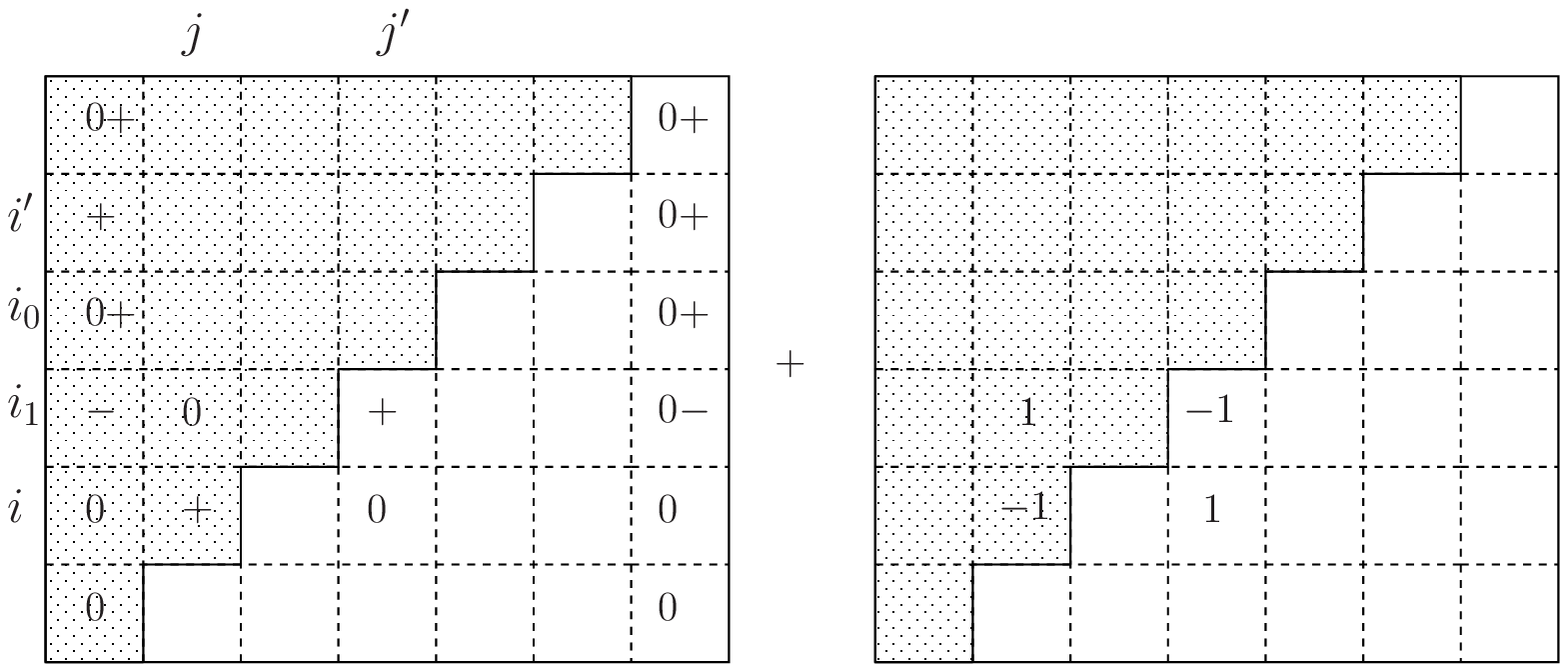}
  \includegraphics[scale=0.6]{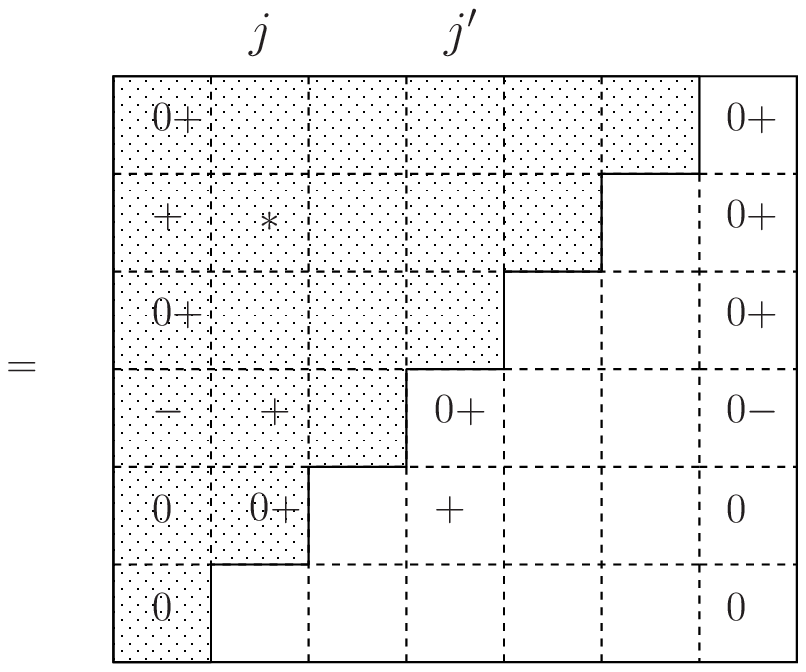}\\
  (i) $Z$ \hspace{3.6cm} (ii) $B$ \hspace{3.6cm} (iii) $Z'$
 \caption{The case of $n=6$, $(i_0,i_1,i,i',j,j') = (3,4,5,2,2,4)$ and
  $z_{i_1 1} < 0$}
 \label{figure:exm-10}
\end{figure}

 \begin{figure}[htbp]
 \centering
  \includegraphics[scale=0.6]{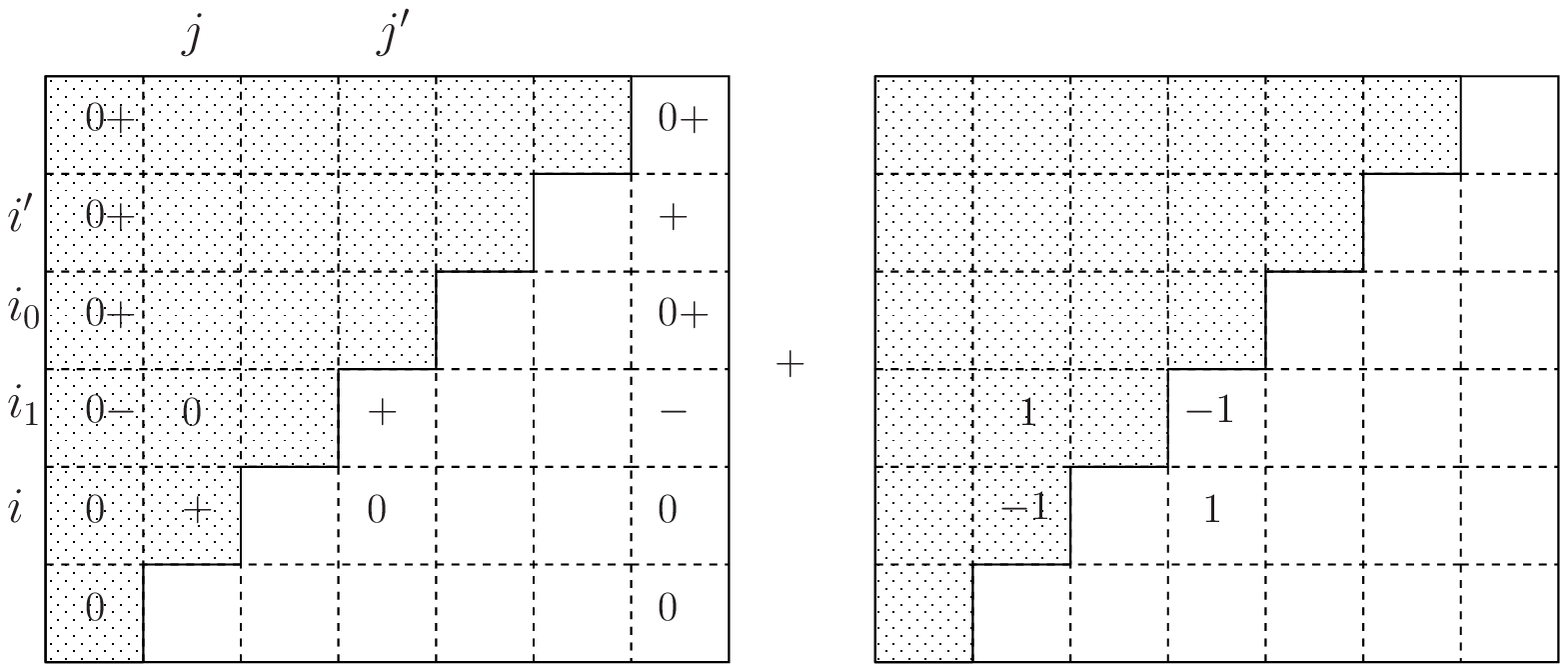}
  \includegraphics[scale=0.6]{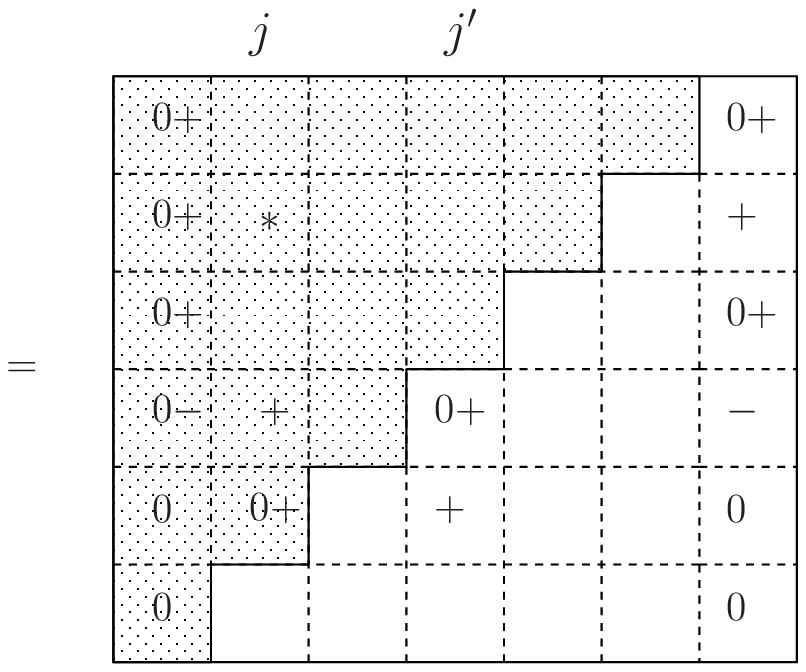}\\
  (i) $Z$ \hspace{3.6cm} (ii) $B$ \hspace{3.6cm} (iii) $Z'$
 \caption{The case of $n=6$, $(i_0,i_1,i,i',j,j') = (3,4,5,2,2,4)$ and
  $z_{i_1,n+1} < 0$}
 \label{figure:exm-10-2}
\end{figure}

We define some more sets.
Let $\bar S^c$ and $\bar S^c_i$, $i=2,\ldots,n$, 
be the sub-triangular set of $S^c$
defined as
$$
\bar S^c = \{(i',j') \in S^c \mid j' \neq n+1\}, \quad
\bar S^c_i = \{(i',j') \in S^c \mid i' < i, \; j' \neq n+1\}, 
$$
respectively. 
Figure \ref{figure:barS} shows $\bar S^c$ and $\bar S^c_i$ for 
$n=4$, $i=4$. 
We note that 
\begin{equation}
 \label{eq:sum_Sbar}
  \sum_{(i,j) \in \bar S^c} z_{ij} = 0
\end{equation}
for all $Z$, because the last column sum is zero and 
$\sum_{(i,j) \in S^c} z_{ij} = 0$.
We also define $\bar S^-_i$, $i=2,\ldots,n$, by 
$$
\bar S^-_i = \{(i',j') \in \bar S^c_i \mid z_{i'j'} < 0\} .
$$

 \begin{figure}[htbp]
 \centering
 \includegraphics{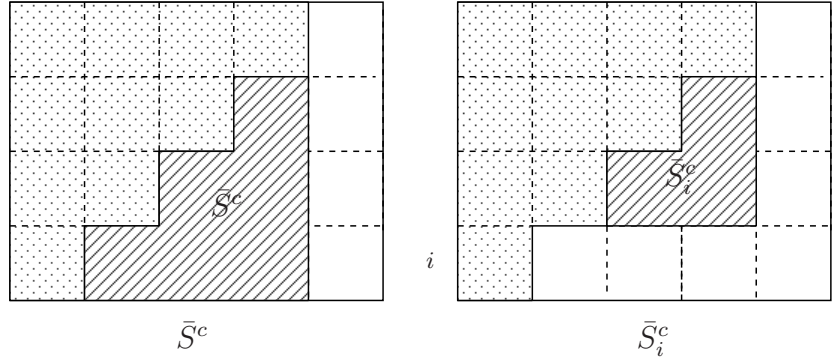}\\
  \hspace*{0.5cm}$\bar S^c$\hspace{5.7cm}$\bar S_i^c$
 \caption{$\bar S^c$ and $\bar S^c_i$ for $n=4$ and $i = 4$}
 \label{figure:barS}
\end{figure}

{}From Lemma \ref{lemma:7} it suffices to consider $Z$ such that
$z_{ij} \le  0$ for all $(i,j) \in S$ and $i \ge i_1$.  The following
lemma states a property of such a $Z$.

\begin{lemma}
 \label{lemma:6}
 Suppose that $Z$ satisfies (\ref{cond:i_0}) and 
define $i_1$ by (\ref{eq:i1}) assuming that $i_1$ exists.
Furthermore assume that 
$z_{ij} \le 0$ for all $(i,j) \in S$, $i \ge i_1$.
Then 
 \begin{equation}
  \label{ineq:2}
  \vert z_{i_1 1} + z_{i_1, n+1} \vert \le 
  \sum_{(i,j) \in \bar S^-_{i_1}} \vert z_{ij} \vert. 
 \end{equation}
\end{lemma}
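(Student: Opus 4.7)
The plan is to bound $\vert z_{i_1, 1} + z_{i_1, n+1}\vert$ from above by chaining three inequalities: first, use the row-sum-zero condition at row $i_1$ to rewrite it as a sum over columns $2, \ldots, n$; next, use (\ref{eq:sum_Sbar}) to transfer that row-$i_1$ sum to a sum over rows $i < i_1$; finally, extract the strictly negative part to recognise $\bar S^-_{i_1}$. Throughout I use the canonical form of the $n\times(n+1)$ triangular set (Figure \ref{figure:triangular}), in which column $1$ lies entirely in $S$ and column $n+1$ entirely in $S^c$.

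For the first step, $i_1 > i_0$ forces $z_{i_1, 1} \le 0$ and $z_{i_1, n+1} \le 0$ by (\ref{cond:i_0}), so
$$\vert z_{i_1, 1} + z_{i_1, n+1}\vert = -z_{i_1, 1} - z_{i_1, n+1} = \sum_{j=2}^{n} z_{i_1 j}$$
using that row $i_1$ sums to zero. For $2 \le j \le n$ the cell $(i_1, j)$ lies in $S$ or in $\bar S^c$; the hypothesis $z_{ij} \le 0$ on $S$ for $i \ge i_1$ makes the $S$-part nonpositive, giving
$$\vert z_{i_1, 1} + z_{i_1, n+1}\vert \le \sum_{j:\,(i_1, j) \in \bar S^c} z_{i_1 j}.$$

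For the second step, I would partition (\ref{eq:sum_Sbar}) by row index into the parts $i < i_1$ (which is exactly $\bar S^c_{i_1}$), $i = i_1$, and $i > i_1$, and show that the third part is nonnegative. Indeed, for $i > i_1$ clause (iii) of (\ref{eq:i1}) gives $z_{i, 1} = z_{i, n+1} = 0$, so the row-sum-zero condition reduces to $\sum_{j=2}^{n} z_{ij} = 0$; combined with $z_{ij} \le 0$ on $S$ for $i \ge i_1$, this forces $\sum_{j:\,(i, j) \in \bar S^c} z_{ij} \ge 0$ for each such row. Rearranging the partition then yields
$$\sum_{j:\,(i_1, j) \in \bar S^c} z_{i_1 j} \le -\sum_{(i, j) \in \bar S^c_{i_1}} z_{ij}.$$

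For the third step, I would split $\bar S^c_{i_1}$ according to the sign of $z_{ij}$: the strictly negative cells form $\bar S^-_{i_1}$, and dropping the (nonpositive) contribution from the positive cells gives $-\sum_{(i,j) \in \bar S^c_{i_1}} z_{ij} \le \sum_{(i,j) \in \bar S^-_{i_1}} \vert z_{ij}\vert$. Chaining the three displayed inequalities produces (\ref{ineq:2}). The real work is in the second step, where all three clauses of (\ref{eq:i1}) together with the sign hypothesis on $S$ must cooperate to make the rows strictly below $i_1$ contribute nonnegatively to the partition of (\ref{eq:sum_Sbar}); the first and third steps are then just careful sign bookkeeping.
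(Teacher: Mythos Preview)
Your proof is correct and takes a genuinely different route from the paper's. The paper argues by contradiction: assuming the inequality fails, it explicitly constructs a modified array $Z'$ by applying a weighted combination of basic moves $B^1(i,j)$ and $B^{n+1}(i,j)$ (formula (\ref{def:z'})) so as to cancel all negative cells in $\bar S^c_{i_1}$ while leaving $z'_{i_1,1}<0$; it then computes $\sum_{(i,j)\in \bar S^c\setminus \bar S^c_{i_1}} z'_{ij}$ two ways and obtains a sign contradiction. Your argument is purely arithmetical: you never touch moves at all, but simply chain the row-sum identity at row $i_1$, the partition of (\ref{eq:sum_Sbar}) by row index, and a trivial sign bound. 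This is cleaner and more elementary, and in particular makes transparent exactly which hypotheses are used where (clause (iii) of (\ref{eq:i1}) only enters through the rows $i>i_1$ in your second step). The one thing the paper's approach buys is that the construction $Z'$ of (\ref{def:z'}) is reused verbatim in the proof of Proposition \ref{prop:triangular-1}; with your proof of the lemma that construction would have to be introduced separately there, but this is a minor expository matter rather than a mathematical gap.
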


\begin{proof}
 Assume that 
 $$
 \vert z_{i_1 1} + z_{i_1, n+1} \vert  > 
 \sum_{(i,j) \in \bar S^-_{i_1}} \vert z_{ij} \vert. 
 $$  
 Since the roles of $z_{i_1 1}$ and $z_{i_1, n+1}$ are interchangeable,
 we assume that $|z_{i_1 1}| > 0$.
 Then 
 there exist nonnegative integers $w^1_{ij}$, $w^{n+1}_{ij}$ and 
 the set of cells $S' \subseteq \bar S^-_{i_1}$ and 
 $S'' \subseteq \bar S^-_{i_1}$ satisfying
 $$
 w^1_{ij} + w^{n+1}_{ij} \le \vert z_{ij} \vert, 
 \quad 
 \sum_{(i,j) \in S'} w^1_{ij} +
 \sum_{(i,j) \in S''} w^{n+1}_{ij} 
 =  \sum_{(i,j) \in \bar S^-_{i_1}} \vert z_{ij} \vert. 
 $$
 $$
 \sum_{(i,j) \in S'} w^1_{ij} < \vert z_{i_1 1} \vert,\quad
 \sum_{(i,j) \in S''} w^{n+1}_{ij} \le \vert z_{i_1,n+1} \vert.
 $$
 $S'$ and $S'{}'$ may have overlap if  $|z_{ij}|\ge 2$ for some
 cell $(i,j) \in \bar S^-_{i_1}$.
 For $(i,j) \in \bar S^-_{i_1}$, 
 let $B^1(i,j)$ and $B^{n+1}(i,j)$ be defined by 
 $$
   B^1(i,j) = (i,j)(i_1,1)-(i,1)(i_1,j), \quad 
   B^{n+1}(i,j)=(i,j)(i_1,n+1)-(i,n+1)(i_1,j).
 $$
 We note that 
 $B^1(i,j) \in {\cal B}_0(S)$ and $B^{n+1}(i,j) \in {\cal B}_0(S)$ for 
 any $(i,j) \in \bar S^-_{i_1}$. 
 Denote 
 \begin{equation}
  \label{def:z'}
   Z^{\prime} = \{z^{\prime}_{ij}\}_{(i,j) \in {\cal I}} 
   =Z + \sum_{(i,j) \in S'} w^1_{ij} B^1(i,j)
   +
   \sum_{(i,j) \in S''} w^{n+1}_{ij} B^{n+1}(i,j).
 \end{equation}
 Then we have 
 $z^{\prime}_{i_1 1} < 0$, $z^{\prime}_{i_1,n+1} \le 0$, 
 and  $z^{\prime}_{ij} \ge 0$ for all
 $(i,j) \in \bar S^c_{i_1}$. 
 This implies that 
 \begin{equation}
  \label{leq:i_1}
   \sum_{(i,j) \in \bar S^c \setminus \bar S^c_{i_1}} z^{\prime}_{ij}
   \le 0.
 \end{equation}
 On the other hand from the condition of Lemma \ref{lemma:6}
\[
\sum_{(i,j) \in \bar S^c \setminus \bar S^c_{i_1}} z^{\prime}_{ij}
= \sum_{i = i_1}^n \sum_{j=1}^{n+1} 
z^{\prime}_{ij} - \big( \sum_{i\ge i_1,\; (i,j)\in S} z^{\prime}_{ij}
 + \sum_{i=i_1}^n z^{\prime}_{i,n+1}\big) > 0,
\]
 which contradicts (\ref{leq:i_1}) (See Figure \ref{figure:exm-11}).
\end{proof}
\begin{figure}[htbp]
 \centering
  \includegraphics[scale=0.7]{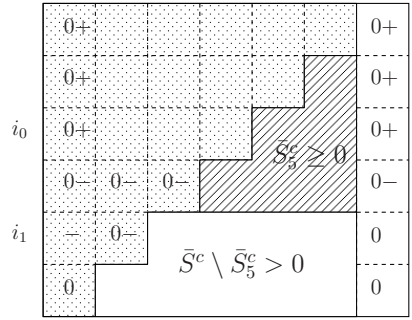}
 \caption{The case of $n=6$ and $(i_0,i_1) = (3,5)$}
 \label{figure:exm-11}
\end{figure}

\begin{lemma}
 \label{lemma:8}
 Suppose that $Z$ satisfies (\ref{cond:i_0}) and the conditions of
 Lemma \ref{lemma:6}. Then 
 \begin{enumerate}
  \item[{\rm (i)}] $i_1 \ge 3$;
  \item[{\rm (ii)}] If $i_1 = 3$, $\Vert Z \Vert_1$ can be reduced.
 \end{enumerate}
\end{lemma}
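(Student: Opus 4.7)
I dispose of part (i) by ruling out $i_1 \leq 2$ via Lemma \ref{lemma:6}, and part (ii) by constructing an explicit reducing sequence.

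\textbf{Part (i).} The remark before Lemma \ref{lemma:7} gives $i_1 > i_0 \geq 1$, so $i_1 \geq 2$. Suppose $i_1 = 2$ and apply Lemma \ref{lemma:6}: in the $n \times (n+1)$ triangular set with one cell per block, the only cell of row $1$ lying in $S^c$ is $(1, n+1)$, which is excluded by the definition of $\bar S^c$. Hence $\bar S^c_2 = \bar S^-_2 = \emptyset$ and (\ref{ineq:2}) becomes $|z_{21} + z_{2, n+1}| \leq 0$. Since $2 > i_0$ gives $z_{21}, z_{2, n+1} \leq 0$ by (\ref{cond:i_0}), both must equal $0$, contradicting clause (ii) of (\ref{eq:i1}).

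\textbf{Part (ii).} For $i_1 = 3$, the analogous computation yields $\bar S^c_3 = \{(2, n)\}$ (only row $2$ contributes, through $(2, n) \in S^c$). The same contradiction trick forces $z_{2, n} < 0$, after which (\ref{ineq:2}) sharpens to $|z_{31}| + |z_{3, n+1}| \leq |z_{2, n}|$. Set $a = |z_{31}|$ and $b = |z_{3, n+1}|$; then $a + b \geq 1$ by clause (ii) of (\ref{eq:i1}), and I assume without loss of generality $a \geq 1$ (if only $b \geq 1$, the argument below uses the analogous basic move through column $n+1$).

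The reducing sequence begins by applying $a$ copies of $B^1(2, n) = (2, n)(3, 1) - (2, 1)(3, n)$ and $b$ copies of $B^{n+1}(2, n) = (2, n)(3, n+1) - (2, n+1)(3, n)$, both in ${\cal B}_0(S)$. The resulting $Z'$ satisfies $z'_{31} = z'_{3, n+1} = 0$ and $z'_{2, n} = z_{2, n} + a + b \leq 0$. A cell-by-cell accounting at the six cells touched by these moves shows a direct norm decrease of $2(a+b)$ at $(2, n), (3, 1), (3, n+1)$, and an offset at $(2, 1), (2, n+1), (3, n)$ bounded by $2(a+b)$, with equality if and only if $z_{2, 1}, z_{2, n+1}, z_{3, n}$ are all $\leq 0$. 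Whenever any of these three is strictly positive, the preliminary sequence already reduces $\Vert Z \Vert_1$ and we are done.

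In the remaining boundary case, $z'_{2, 1} = z_{2, 1} - a \leq -a \leq -1$, so the zero row-sum of row $2$ of $Z'$ forces
\[
\sum_{j=2}^{n-1} z'_{2, j} = |z'_{2, 1}| + |z'_{2, n}| + |z'_{2, n+1}| \geq 1,
\]
giving a cell $(2, j_0) \in S$ with $j_0 \in \{2, \ldots, n-1\}$ and $z'_{2, j_0} \geq 1$. The zero column-$1$ sum of $Z$ further yields $z_{1, 1} = |z_{2, 1}| + a \geq a \geq 1$. I then append $B = (1, j_0)(2, 1) - (1, 1)(2, j_0) \in {\cal B}_0(S)$ (all four cells lie in the $S$-triangle, so the subtable-sum balance is automatic), and a short case analysis on the sign of $z_{1, j_0}$ shows $\Vert Z' + B \Vert_1 \leq \Vert Z' \Vert_1 - 2$, yielding overall strict reduction of $\Vert Z \Vert_1$. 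The main obstacle is the six-cell bookkeeping for the preliminary sequence; once the bound $2(a+b)$ on the offset is established, the follow-up move is essentially forced by the row-sum structure of row $2$ of $Z'$ and by the location of $(1, 1), (2, 1), (1, j_0), (2, j_0)$ inside the upper-left $S$-block.
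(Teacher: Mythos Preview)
Your proof is correct, but it takes a heavier route than the paper's in part (ii).

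For part (i), the paper argues directly from the identity $\sum_{(i,j)\in \bar S^c} z_{ij}=0$: if $i_1=2$ then conditions (ii) and (iii) of (\ref{eq:i1}) together with the row-sum constraint force this sum to be strictly positive, a contradiction. Your route via the inequality of Lemma~\ref{lemma:6} is equally short and perfectly valid.

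For part (ii), the paper avoids your preliminary batch of $a+b$ moves entirely. It simply splits on the sign of $z_{2,1}$ and $z_{2,n+1}$: if $z_{2,1}>0$ (so $z_{3,1}<0$ by the column sum and (\ref{eq:i1})(iii)), the single move $(3,1)(2,n)-(2,1)(3,n)$ already reduces the norm; symmetrically for $z_{2,n+1}>0$. Only when $z_{2,1}=z_{2,n+1}=0$ does the paper apply one preparatory move $(1,j)(2,1)-(1,1)(2,j)$ (with $z_{2,j}>0$, $2\le j\le n-1$) to create $z'_{2,1}>0$, and then reduce as in the first case. So the paper uses at most two moves total.

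Your approach instead specializes the $Z'$ construction from the proof of Lemma~\ref{lemma:6}: you zero out $z_{3,1}$ and $z_{3,n+1}$ with $a+b$ moves, do the six-cell bookkeeping to bound the offset by $2(a+b)$, and then append one further move in the boundary case. This is correct and has the virtue of reusing existing machinery rather than inventing a new case split, but it is longer, requires tracking more cells, and the symmetry appeal for the case $a=0,\ b\ge 1$ (where $j_0$ may now equal $1$ and the appended move runs through column $n+1$ instead) needs a sentence more care than ``analogous'' suggests. The paper's argument is more ad hoc but markedly simpler.
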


\begin{proof}
 (i)\; It is obvious that $i_1 \ge 2$. 
 Suppose $i_1 =2$. 
 Since any row sum of $Z$ is zero, 
 we have 
 $$
 \sum_{(i,j) \in \bar S^c} z_{ij} > 0, 
 $$
 from (ii) and (iii) of (\ref{eq:i1}).
 However this contradicts (\ref{eq:sum_Sbar}).\\
 (ii)\;  When $i_1=3$, $\bar S_3 = \{(2,n)\}$ and $z_{2n} < 0$ 
 from Lemma \ref{lemma:6}. 
 If $z_{21} > 0$, we have $z_{31} < 0$ from 
 (iii) of (\ref{eq:i1}).
 Therefore $B=(3,1)(2,n)-(2,1)(3,n)$ satisfies
 $\Vert Z + B\Vert_1 < \Vert Z \Vert_1$ (Figure \ref{figure:exm-12}-(i)).
 If $z_{2,n+1} > 0$, $z_{3,n+1} < 0$ 
 from (iii) of (\ref{eq:i1}).
 Hence $B=(2,n)(3,n+1)-(2,n+1)(3,n)$
 satisfies
 $\Vert Z + B\Vert_1 < \Vert Z \Vert_1$(Figure \ref{figure:exm-12}-(ii)).
 Next we consider the case of $z_{21}=z_{2,n+1} = 0$. 
 Then $z_{11} > 0$ or $z_{1,n+1} > 0$.
 Suppose that $z_{11} > 0$.
 This implies $z_{31} < 0$.
 Since $z_{2n} < 0$, 
 there exists $2 \le j \le n-1$ such that $z_{2j} > 0$.
 Then if we set 
 $B=(1,j)(2,1)-(1,1)(2,j)$, 
 we have 
 $\Vert Z' \Vert_1 = \Vert Z + B \Vert_1 \le \Vert Z \Vert_1$
 and 
 $z'_{21} > 0$, $z'_{31} < 0$ and $z'_{2n} < 0$ (Figure \ref{figure:exm-13}).
 Then $\Vert Z' \Vert_1$ can be reduced by 
 ${\cal B}_0(S)$. 
 Therefore $\Vert Z \Vert_1$ can be also reduced by 
 ${\cal B}_0(S)$. 
When $z_{1,n+1} > 0$, a proof is similar.
\end{proof}

\begin{figure}[htbp]
\centering
 \includegraphics{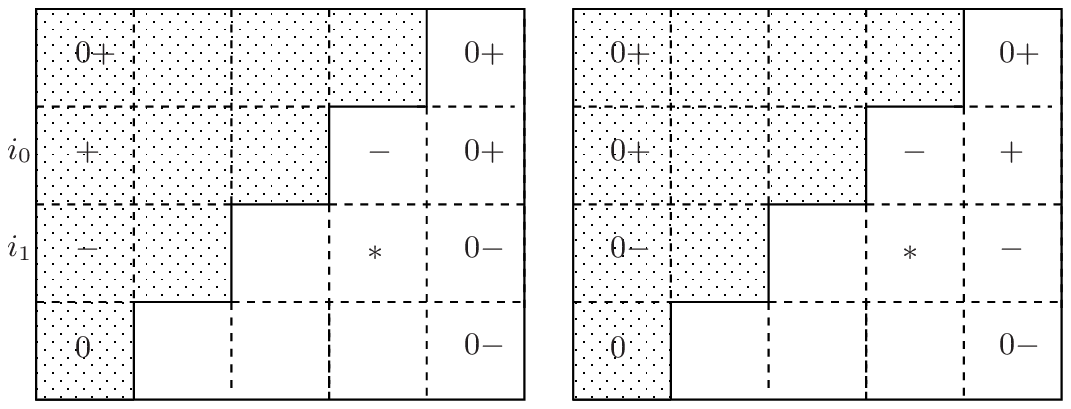}\\
  \hspace*{0.7cm}(i) $z_{21} > 0$ \hspace{3.3cm}(ii) $z_{25} > 0$
 \caption{The case of $n=4$ and $z_{21}>0$ 
 or $z_{25}>0$}
 \label{figure:exm-12}
\end{figure}

 \begin{figure}[htbp]
 \centering
  \includegraphics[scale=0.9]{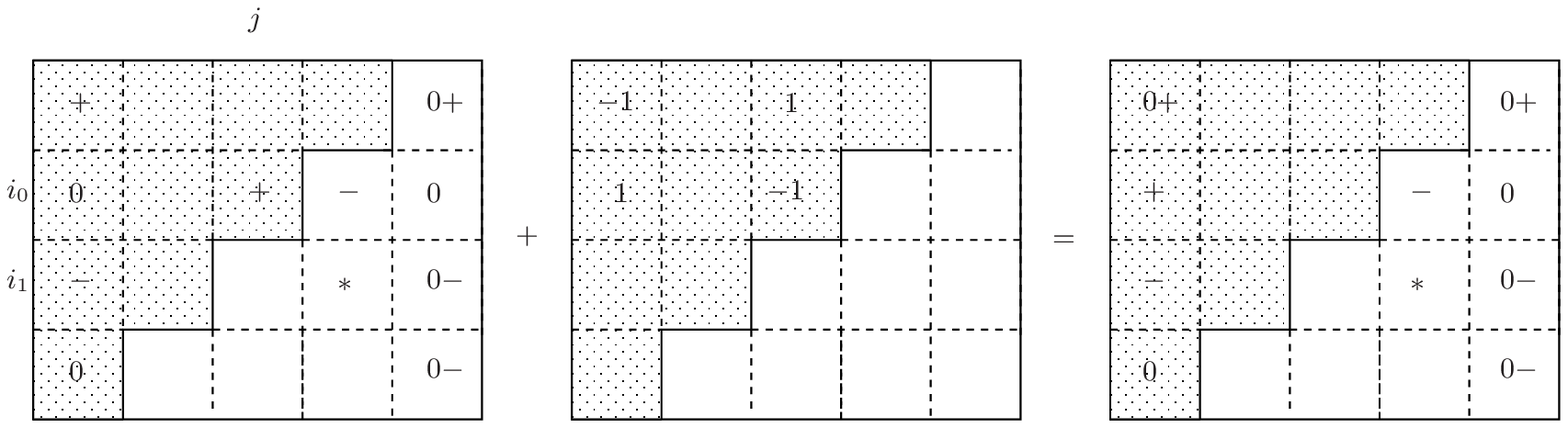}\\  
  (i) $Z$ \hspace{4.2cm} (ii) $B$ \hspace{4cm} (iii) $Z'$
 \caption{The case of $n=4$, $z_{21}=0$ and $z_{25}=0$}
 \label{figure:exm-13}
\end{figure}

By using Lemmas \ref{lemma:3}--\ref{lemma:8}, 
we give a proof of Proposition \ref{prop:triangular-1}.

\begin{proof}[Proof of Proposition \ref{prop:triangular-1}]
We prove this proposition by the induction on the number of rows $n$.
Suppose that $n=2$. 
Then 
$$
z_{1j} + z_{2j} = 0 \quad \mbox{for} \quad j=1,2,3.
$$
$$
z_{11} + z_{12} + z_{21} = 0, \quad 
z_{22} + z_{13} + z_{23} = 0.
$$
Hence $z_{12} = z_{22} =0$. 
Therefore $Z$ is equivalent to a move in 
the $2 \times 2$ pattern as in Figure \ref{figure:2x2} with 
fixed row sums and column sums.
It is easy to see that that proposition holds 
for this pattern.

\begin{figure}[htbp]
 \centering
 \includegraphics[scale=0.7]{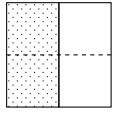}
 \caption{A $2 \times 2$ pattern}
\label{figure:2x2}
\end{figure}

Suppose $n > 2$ and assume that this proposition holds 
for triangular sets smaller than $n \times (n+1)$.
From the results of Lemmas \ref{lemma:3}--\ref{lemma:8}, 
it suffices to show that if $Z$ satisfies (\ref{cond:i_0}) and 
the conditions of Lemma \ref{lemma:6}, 
$\Vert Z \Vert_1$ can be reduced by ${\cal B}_0(S)$. 
We prove this by the induction on $i_1$. 

Suppose that $i^*_1 > 3$ and assume that 
$Z$ with $i_1 < i^*_1$ 
can be reduced by ${\cal B}_0(S)$. 
From Lemma \ref{lemma:6}, (\ref{ineq:2}) holds.
Thus 
there exist nonnegative integers $w^1_{ij}$, $w^{n+1}_{ij}$ and 
the set of cells $S' \subseteq \bar S^-_{i_1}$ and 
$S'' \subseteq \bar S^-_{i_1}$ satisfying
$$
w^1_{ij} + w^{n+1}_{ij} \le \vert z_{ij} \vert, 
\quad 
\sum_{(i,j) \in S'} w^1_{ij} = \vert z_{i_1 1} \vert,\quad
\sum_{(i,j) \in S''} w^{n+1}_{ij} = \vert z_{i_1,n+1} \vert.
$$

Let $Z'$ be defined as in (\ref{def:z'}).
Then we have $\Vert Z' \Vert_1 \le \Vert Z \Vert_1$.
If $\Vert Z' \Vert_1 < \Vert Z \Vert_1$, 
this proposition holds.
Suppose that $\Vert Z' \Vert_1 = \Vert Z \Vert_1$.
Then $Z'$ satisfies either of the following three conditions, 
\begin{enumerate}
 \item[(i)] $z^{\prime}_{i1}=0$ and $z^{\prime}_{i,n+1}=0$ for
     $i=1,\ldots,n$; 
 \item[(ii)] there exists $i$ such that 
     $z^{\prime}_{i1} \neq 0$ or $z^{\prime}_{i,n+1} \neq 0$ 
     and $Z'$ does not satisfy (\ref{cond:i_0}).
 \item[(iii)] there exists $i$ such that 
     $z^{\prime}_{i1} \neq 0$ or $z^{\prime}_{i,n+1} \neq 0$ 
     and $Z'$ satisfies (\ref{cond:i_0}).
\end{enumerate}
In the case of (i), $\Vert Z' \Vert_1$ can be reduced by 
${\cal B}_0(S)$ from the inductive assumption on $n$.
In the case of (ii), $\Vert Z' \Vert_1$ can be reduced by
by Lemma \ref{lemma:5}. 
In the case of (iii), 
noting that $z^{\prime}_{i1}=0$ and $z^{\prime}_{i,n+1}=0$ for 
$i \ge i_1$, 
$\Vert Z' \Vert_1$ can be reduced from the
inductive assumption on $i_1$.
\end{proof}

So far we have given a proof when every block has only 
one cell.  It remains to extend Proposition \ref{prop:triangular-1}
to general triangular sets.  Based on the results of Lemma
\ref{lemma:reduction} 
and \ref{lemma:2}, we see that Proposition \ref{prop:triangular-1}
can be extended to the case where $n \ge 2$.
Then it suffices to consider
the case of $n=1$ as in Figure \ref{figure:1x2}.

\begin{figure}[htbp]
 \centering
 \includegraphics{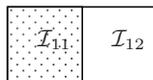}
 \caption{$1 \times 2$ triangular pattern}
\label{figure:1x2}
\end{figure}

\begin{lemma}
Suppose that $S$ is equivalent to a $1 \times 2$ triangular set. 
 Then ${\cal B}_0(S)$ is a Markov basis for $A_S$.
\end{lemma}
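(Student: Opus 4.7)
The plan is to argue exactly as in the proof of Proposition \ref{prop:block} for the $2 \times 2$ block diagonal case. After an appropriate interchange of rows and columns, the $1 \times 2$ triangular set $S$ decomposes the table into four rectangular blocks ${\cal I}_{kl}$, $k, l \in \{1,2\}$, where $S = {\cal I}_{11}$ is the top-left block. Given any $Z = X - Y$ with $Z \neq 0$ in the same fiber, I would show that $\Vert Z \Vert_1$ can be reduced by ${\cal B}_0(S)$.

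The first step is to invoke Lemma \ref{lemma:2}: if some block ${\cal I}_{kl}$ contains both positive and negative entries of $Z$, then $\Vert Z \Vert_1$ is immediately reducible. So I may assume that every block of $Z$ is either nonnegative or nonpositive. Without loss of generality take ${\cal I}_{11} \ge 0$. The subtable-sum constraint $\sum_{(i,j) \in S} z_{ij} = 0$ reduces to $\sum_{(i,j) \in {\cal I}_{11}} z_{ij} = 0$, and combined with ${\cal I}_{11} \ge 0$ this forces ${\cal I}_{11} = 0$. For each row $i$ in the row range of ${\cal I}_{11}$, the vanishing row sum $\sum_j z_{ij} = 0$ now gives that the restriction of that row to ${\cal I}_{12}$ sums to zero; since ${\cal I}_{12}$ is nonnegative or nonpositive, this forces ${\cal I}_{12} = 0$. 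A symmetric column-sum argument gives ${\cal I}_{21} = 0$. Finally, since ${\cal I}_{12}$ and ${\cal I}_{21}$ vanish, the row sums of the remaining block ${\cal I}_{22}$ must also be zero, so ${\cal I}_{22} = 0$. Thus $Z = 0$, contradicting $Z \neq 0$.

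This contradiction shows that whenever $Z \neq 0$, some block must have mixed signs, and Lemma \ref{lemma:2} then reduces $\Vert Z \Vert_1$. Consequently ${\cal B}_0(S)$ forms a Markov basis for $A_S$. There is no real obstacle here: the argument is structurally the same as for $2 \times 2$ block diagonal sets, with the only modification being that the subtable-sum constraint is concentrated in the single block ${\cal I}_{11}$ rather than split between two diagonal blocks.
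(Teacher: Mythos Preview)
Your overall strategy (invoke Lemma~\ref{lemma:2} to reduce to sign-constant blocks, then use the linear constraints to force $Z=0$) is exactly the paper's approach, but you have misread what a $1\times 2$ triangular set is. In the paper's block indexing, an $n\times(n+1)$ triangular set has $n$ row-blocks and $n+1$ column-blocks; for $n=1$ this means \emph{one} row-block and \emph{two} column-blocks, so the table decomposes into only two blocks, ${\cal I}_{11}=S$ and ${\cal I}_{12}=S^c$, not four. The configuration you describe, with $S$ sitting as the top-left corner of a genuine $2\times 2$ block partition, would be a $2\times 2$ triangular set in the paper's terminology (two distinct slices ${\cal J}(i)$, namely $\{1,\dots,c\}$ and $\emptyset$), which is already covered by Proposition~\ref{prop:triangular-1}.

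Once the block structure is corrected, your argument collapses to the paper's two-line proof: since $\sum_{(i,j)\in S} z_{ij}=0$ and $\sum_{(i,j)\in S^c} z_{ij}=0$, any nonzero $Z$ must have both positive and negative entries in at least one of ${\cal I}_{11}$ or ${\cal I}_{12}$, and Lemma~\ref{lemma:2} finishes. Your extra steps involving ${\cal I}_{21}$ and ${\cal I}_{22}$ are vacuous here because those blocks do not exist.
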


\begin{proof}
 Since $Z$ satisfies $\sum_{(i,j) \in S} z_{ij}=0$ and 
 $\sum_{(i,j) \in S^c} z_{ij}=0$, 
 $Z\neq 0$ has to contain both positive and negative cells in  
 at least one of ${\cal I}_{11}$ and ${\cal I}_{12}$.
 Hence $\Vert Z \Vert_1$ can be reduced from Lemma \ref{lemma:reduction}.
\end{proof}

Now we have completed a proof of the sufficient condition of Theorem
\ref{Th:main} for general triangular set $S$.

\section{Concluding remarks}
\label{sec:discussion}

In this paper we consider Markov bases consisting of square-free
moves of degree two for two-way subtable sum problems.  
We gave a necessary and sufficient condition for the existence
of a Markov basis consisting of square-free moves of degree two.

From our results, if $S$ contains a pattern ${\cal P}$ or ${\cal P}^t$,   
a Markov basis has to include a move with degree higher than or equal
to four. From theoretical viewpoint, it is interesting to study the structure
of Markov bases for such cases. 
Our results may give insights into the problem. 
However it seems difficult at this point and left to our future
research. 

Consider a particular fiber with $x(S)=0$ in the subtable sum
problems.  Then $x_{ij}=0$ for all $(i,j)\in S$.  This implies that
this fiber is also a fiber for a problem where all cells of $S$ are
structural zeros.  Therefore Markov bases for the subtable sum
problems for $S$ are also Markov bases for a problem where all cells
of $S$ are structural zeros.  Various properties of Markov bases are
known for structural zero problems.  It is of interest to investigate
which properties of Markov bases for structural zero problem for $S$
can be generalized to subtable sum problem for $S$.

Ohsugi and Hibi have been investigating properties of Gr\"obner bases
arising from finite graphs (\cite{ohsugi-hibi-1999aam},
\cite{ohsugi-hibi-1999ja}, \cite{ohsugi-hibi-2005jaa}). 
With bipartite graphs, their
problem is equivalent to two-way contingency tables with structural
zeros.  
From the viewpoint of graphs of Ohsugi and Hibi, the subtable sum
problem corresponds to a complete bipartite graph with two kinks of
edges. 
It would be also very interesting to investigate  subtable sum problem
from the viewpoint of Gr\"obner bases.

We used the norm reduction argument to prove that ${\cal B}_0(S)$ is a
Markov basis.  It should be noted that ${\cal B}_0(S)$ for the
subtable sum problem is not necessarily 1-norm reducing {\em in one
  step}, even when ${\cal B}_0(S)$ is the unique minimal Markov basis.
Therefore the subtable sum problem is worth to be considered from 
the viewpoint of norm reduction by a Markov basis.

\bibliographystyle{plainnat}
\bibliography{Hara-Takemura-Yoshida}
\end{document}